\documentclass[12pt]{amsart}
\usepackage{amsmath,amsthm,latexsym,amscd,amsbsy,amssymb,amsfonts,fleqn,leqno,
euscript}

\numberwithin{equation}{section}




\newcommand{\U}{\mathcal U}

\newcommand{\e}{\varepsilon}

\newtheorem{thm}{Theorem}[section]
\newtheorem{pro}[thm]{Proposition}
\newtheorem{lem}[thm]{Lemma}
\newtheorem{cor}[thm]{Corollary}



\begin{document}

\title[Parametric set-wise injective maps]
{Parametric set-wise injective maps}
\author{Eiichi Matsuhashi}
\address{Department of Mathematics and Computer Science, Shimane University,
Matsue, Shimane, 690-8504, Japan}
\email{matsuhashi@riko.shimane-u.ac.jp}
\author{Vesko Valov}
\address{Department of Computer Science and Mathematics,
Nipissing University, 100 College Drive, P.O. Box 5002, North Bay,
ON, P1B 8L7, Canada}
\email{veskov@nipissingu.ca}
\thanks{} \keywords{disjoint-disk property, fiber embedding, set-wise injective maps, source limitation topology}
\subjclass{Primary 54F15; Secondary 54F45, 54C10}

\begin{abstract}
We introduce the notion of set-wise injective maps and provide results about fiber embeddings. Our results improve some
previous results in this area.
\end{abstract}
\maketitle \markboth{}{Set-wise injective maps}


\section{Introduction}
All spaces in the paper are assumed to be metrizable and all maps
continuous. Unless stated otherwise, any function space $C(X,M)$ is
endowed with the {\em source limitation topology}. This topology,
known also as the {\em fine topology}, was introduced in
\cite{w} and has a base at a given $g\in C(X,M)$ consisting of the
sets
$$B_\varrho(g,\e)=\{h\in C(X,M):\varrho(h,g)<\e\},$$
where $\varrho$ is a fixed compatible metric on $M$ and
$\e:X\to(0,1]$ runs over continuous functions into $(0,1]$. The
symbol $\varrho(h,g)<\e$ means that
$\varrho\big(h(x),g(x)\big)<\e(x)$ for all $x\in X$. The source
limitation topology doesn't depend on the metric $\varrho$ \cite{nk}
and has the Baire property provided $M$ is completely metrizable
\cite{munkers}. Obviously, this topology coincides with the uniform
convergence topology when $X$ is compact.

We say that a space $M$ has the {\em $\NDDP[m]{n,k}$-property}
if any two
maps $f:\mathbb I^m\times \mathbb I^n\to M$, $g:\mathbb I^m\times
\mathbb I^k\to M$ can be approximated by maps $f':\mathbb I^m\times \mathbb
I^n\to M$ and $g':\mathbb I^m\times \mathbb I^k\to M$, respectively, such that
$f'(\{z\}\times \mathbb I^n)\cap g'(\{z\}\times \mathbb I^k)=\varnothing$ for all $z\in \mathbb I^m$.
Obviously, if $M$ has the $\NDDP[m]{n,k}$-property, then it also has the
$\NDDP[m']{n',k'}$-property for all $m'\leq m$, $n'\leq n$ and $k'\leq k$. The $\NDDP[0]{n,k}$-property
coincides with the well known disjoint $(n,k)$-cells property.
The $\NDDP[m]{n,k}$-property is very similar to the $\HDD[m]{n,k}$-property introduced in \cite[Definition 5.1]{bv}, where
it is required for any open cover $\U$ of $M$ the maps $f,g$ to be approximated by maps $f',g'$
such that $f',g'$ are $\U$-homotopic to $f$ and $g$, respectively and
$f'(\{z\}\times \mathbb I^n)\cap g'(\{z\}\times \mathbb I^k)=\varnothing$ for all $z\in \mathbb I^m$.
For example, it follows from \cite[Proposition 5.6 and Theorem 9.1]{bv} that every dendrite with a dense set of end-point has both
the $\NDDP[0]{0,\infty}$-property and the $\NDDP[1]{0,0}$-property, while $\mathbb R^{m+n+k+1}$ has the $\NDDP[m]{n,k}$-property.

The notion of continuum-wise injective maps was introduced in \cite{km} for maps between compact spaces.
Here we extend this definition for arbitrary spaces and arbitrary closed sets (not necessarily continua as in \cite{km}): A map $g:X\to M$ is {\em set-wise injective} if for
any two closed sets $A,B\subset X$ with $A\neq B$, we have $g(A)\neq g(B)$. We also consider the following
specialization of that property: a map $g:X\to M$ is {\em set-wise injective in dimension $k$} (see also \cite{km})
if $g(A)\neq g(B)$ for any two closed sets $A,B\subset X$ such that $\dim (A\setminus B )\geq k$. Obviously, every set-wise injective map in dimension 0 is injective.
Observed that for any two continua $A,B\subset X$ with $A\setminus B\neq\varnothing$ we have $\dim (A\setminus B )\geq 1$. Hence, every set-wise injective map in dimension $1$ is automatically
continuum-wise injective.

The main results in this paper is the following theorem, which is a parametric version of Theorem 3.11 from \cite{km} (recall that a map $f:X\to Y$ is $\sigma$-perfect if $X$ is a countable union of countably many closed sets $X_i$ such that each restriction $f|X_i:X_i\to f(X_i)$ is a perfect map):

\begin{thm} Let $f\colon X\to
Y$ be a $\sigma$-perfect surjective $n$-dimensional map between metric spaces such that $\dim Y\leq m$ and $M$ be a complete separable metric $\LC[2m+n-1]$-space with the
$\NDDP[m]{n,k}$-property with $k\leq n$. Then the function space $C(X,M)$ contains a dense
$G_\delta$-set of maps $g$ such that all restrictions $g|f^{-1}(y)$,
$y\in Y$, are set-wise injective in dimension $n-k$
\end{thm}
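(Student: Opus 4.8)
The plan is to exhibit the required maps as a dense $G_\delta$-subset of $C(X,M)$, which is legitimate because $M$ is completely metrizable and hence $C(X,M)$ with the source limitation topology is a Baire space. First I would use the $\sigma$-perfectness to reduce to a perfect map. Writing $X=\bigcup_{i\in\IN}X_i$ with each $X_i$ closed and $f|X_i\colon X_i\to f(X_i)$ perfect, any failure of set-wise injectivity in dimension $n-k$ on a fiber is witnessed by closed sets $A,B\subset f^{-1}(y)$ with $\dim(A\setminus B)\ge n-k$ and $g(A)=g(B)$; by the countable sum theorem the difference $A\setminus B$ attains dimension $\ge n-k$ already on some piece $A\cap X_i$. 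Thus it suffices to build, for each $i$, a dense $G_\delta$-set of maps handling violations localized to $X_i$, and to intersect these countably many sets. From now on I assume $f$ is perfect, so every fiber is compact, $\dim f^{-1}(y)\le n$, and $y\mapsto f^{-1}(y)$ is upper semicontinuous.

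Second, following \cite{km}, I would reformulate set-wise injectivity in dimension $n-k$ as a disjointness-of-images property and make it countable. As shown in \cite{km}, a restriction $g|f^{-1}(y)$ is set-wise injective in dimension $n-k$ amounts to $f^{-1}(y)$ carrying no pair of disjoint closed sets $P,Q$ with $\dim P\ge n-k$ and $g(P)\subseteq g(Q)$; by the dimension-theoretic lemma underlying Theorem 3.11 of \cite{km}, ruling this out on a fiber of dimension $\le n$ reduces to forcing, for every $n$-cell and every $k$-cell lying in that fiber, their $g$-images to be disjoint. Fixing a sequence of finite closed covers of $X$ of mesh tending to $0$ and a countable base of $M$, I index by combinatorial data $\alpha$ consisting of two disjoint compact ``cells'' $K_1,K_2$ contained in a common fiber, and put
$$G_\alpha=\{g\in C(X,M): g(K_1)\cap g(K_2)=\varnothing\}.$$
Since $K_1,K_2$ are compact, each $G_\alpha$ is open (disjointness of images of compacta is a positive-distance condition, stable under small perturbations), and $\bigcap_\alpha G_\alpha$ consists exactly of the maps whose fiber-restrictions are all set-wise injective in dimension $n-k$.

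Third --- the crux --- I would prove each $G_\alpha$ dense. Given $g$ and a positive continuous $\e\colon X\to(0,1]$, I would use $\dim Y\le m$ to map a neighborhood of the relevant base point into $\I^m$ and $\dim f\le n$ to model the fiber directions by $\I^n$ and $\I^k$, so that the data $(K_1,K_2)$ is presented by a pair of maps $\I^m\times\I^n\to M$ and $\I^m\times\I^k\to M$. The $\NDDP[m]{n,k}$-property then supplies arbitrarily small perturbations of these two maps whose images are disjoint in each fiber over $z\in\I^m$, i.e.\ exactly the separation demanded by $\alpha$. It remains to transport this cube-wise perturbation back to an actual map $g'\in C(X,M)$ with $\varrho(g',g)<\e$; for this I would run a controlled extension/approximation argument of the type in \cite{bv}, where the hypothesis $\LC[2m+n-1]$ furnishes precisely the connectivity needed to extend and homotope maps over the $(2m+n)$-dimensional configuration arising from the base-parametrized $n$- and $k$-cells, and completeness of $M$ guarantees that the resulting limit map exists and stays $\e$-close.

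The main obstacle is this last globalization. The $\NDDP[m]{n,k}$-property only yields disjointness cube by cube, whereas the target condition must hold across the whole, non-compact base simultaneously and must not destroy separations already secured for other data $\alpha'$. Converting the countably many local, parametrized perturbations into a single continuous map that is $\e$-close in the source limitation topology is where both the completeness of $M$ (for the Baire/limit step) and the connectivity range $\LC[2m+n-1]$ (for the extension and general-position estimates) are indispensable, and where the precise matching of $\dim P\ge n-k$ against the separated pair of $n$- and $k$-cells has to be maintained throughout.
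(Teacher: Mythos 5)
Your proposal correctly identifies the overall architecture (reduce to perfect $f$, express the target set as a countable intersection of open dense conditions, use the $\NDDP[m]{n,k}$-property for density), but both of the reductions that would make this architecture work are missing or wrong. First, the reformulation of set-wise injectivity in dimension $n-k$ in terms of ``every $n$-cell and every $k$-cell lying in the fiber'' is not meaningful: a compactum of dimension $\le n$ (for instance a hereditarily indecomposable fiber) need not contain any cell whatsoever, so forcing images of cells to be disjoint rules out nothing. What is actually needed, and what the paper takes from \cite[Theorem 1.4]{tv}, is a decomposition theorem: there exist closed sets $F_i\subset X$ with $\dim F_i\leq k$ such that $\dim f|(X\setminus\bigcup_{i}F_i)\leq n-k-1$. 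A violating pair ($K_1\neq K_2$ closed in a fiber with $\dim(K_2\setminus K_1)\geq n-k$ and $g(K_1)=g(K_2)$) then forces a point of $K_2\setminus K_1$ into some $F_i$, so it suffices to impose conditions $g(A\cap f^{-1}(y))\cap g(B\cap f^{-1}(y))=\varnothing$ for all $y$ simultaneously, for pairs $(A,B)$ with $\dim A\leq k$. That dimension restriction on one member of each pair is essential: with only the $\NDDP[m]{n,k}$-property ($k<n$) one cannot make images of two $n$-dimensional sets fiberwise disjoint, so your unrestricted sets $G_\alpha$ would not be dense. Moreover your index set is uncountable --- pairs of disjoint compacta ``contained in a common fiber'' range over uncountably many fibers --- so $\bigcap_\alpha G_\alpha$ is not exhibited as a $G_\delta$-set. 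The paper achieves countability by fixing a map $h\colon X\to\mathbb I^{\aleph_0}$ embedding every fiber (\cite[Proposition 9.1]{pa}) and a countable finitely additive base $\Gamma$ of $\mathbb I^{\aleph_0}$, taking $A=h^{-1}(\overline U_j)\cap F_i$ and $B=h^{-1}(\overline U_{j'})$; each resulting condition $C(X,M;A,B;f)$ is quantified over all fibers at once, and its openness is itself a nontrivial lemma using perfectness of $f$ (Lemma 2.1), not a pointwise positive-distance remark.

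Second, the density of each condition --- which you acknowledge is ``the main obstacle'' --- is exactly the part your sketch does not supply, and the route you indicate cannot work literally: an $n$-dimensional perfect map does not factor through $\mathbb I^m\times\mathbb I^n$, so the data cannot be ``presented by a pair of maps'' from cubes. The paper's mechanism is different: the $\NDDP[m]{n,k}$-property is first converted (Proposition 2.3) into the existence of disjoint $\sigma$-compact sets $E_n,E_k\subset \mathbb I^{p+1}\times M$ with $E_n\in\MAP[\mathbb I^{p+1}]{n}$ and $E_k\in\MAP[\mathbb I^{p+1}]{k}$; the $\MAP$-property is precisely the device that applies to arbitrary $n$-dimensional (resp.\ $k$-dimensional) maps of compacta rather than to cube projections. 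Globalization over $Y$ is then obtained not by a Baire-category/limit argument but by a selection theorem: one forms the set-valued maps $\Phi_\e(y)=B_\rho(g_0,\e)\cap C_y(X,M;A,B)$, verifies the stability and aspherical-type extension properties (Lemmas 2.4 and 2.5, where the $\LC[2m+n-1]$ metric and the estimate $\dim(\mathbb I^{p+1}\times X)\leq 2m+n$ enter), and applies Gutev's selection theorem \cite{gu} to get a continuous $\theta\colon K\to C(X,M)$, which is glued via $g(x)=\theta(f(x))(x)$ and extended using the special metric. Without the decomposition theorem, the countable family of fiber-global conditions with controlled dimension, and this selection machinery, your outline restates the difficulty rather than resolving it.
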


\begin{cor}
Let $X,Y$ and $f$ be as in Theorem $1.1$ and $P\subset Q\subset X$ be two $F_\sigma$-subsets of $X$ such that $\dim (P\cap f^{-1}(y))\leq p$ and
$\dim (Q\cap f^{-1}(y))\leq q$ for every $y\in Y$, where $0\leq p\leq q\leq n$. Then for every complete separable metric $\LC[2m+n-1]$-space $M$ with the $\NDDP[m]{q,p}$-property
the space $C(X,M)$ contains a dense $G_\delta$-set of maps $g$ satisfying the following condition: $g^{-1}(g(z))\cap Q\cap f^{-1}(y)=\{z\}$ for all $z\in P\cap f^{-1}(y)$ and all $y\in Y$.
\end{cor}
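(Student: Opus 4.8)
The plan is to deduce the Corollary by running the Baire-category scheme behind Theorem 1.1, now with the two cell families supplied by $P$ and $Q$ and with the parameters $(n,k)$ of that proof replaced by $(q,p)$; a direct appeal to the \emph{conclusion} of Theorem 1.1 applied to $f|Q$ does not suffice, since set-wise injectivity of $g|(Q\cap f^{-1}(y))$ in dimension $q-p$ still permits isolated identifications of a point of the $\le p$-dimensional set $P\cap f^{-1}(y)$ with another point of $Q\cap f^{-1}(y)$. First I record the routine preliminaries. Since $P$ and $Q$ are $F_\sigma$ in $X$ and $f$ is $\sigma$-perfect, writing $X=\bigcup_l X_l$ with each $f|X_l$ perfect and $P=\bigcup_i P_i$, $Q=\bigcup_j Q_j$ with $P_i,Q_j$ closed in $X$, the countable families $\{P_i\cap X_l\}$ and $\{Q_j\cap X_l\}$ consist of closed sets on which $f$ restricts to a perfect map, and whose fibers have dimension $\le p$ and $\le q$ respectively. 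Since $q\le n$, the space $M$, being $\LC[2m+n-1]$, is also $\LC[2m+q-1]$, and $\NDDP[m]{q,p}$ is precisely the disjoint-cells hypothesis matching the dimensions $q$ and $p$; thus every hypothesis needed for the $(q,p)$-version of the scheme is in force.

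Next I reduce the desired conclusion to a countable family of disjointness conditions. The displayed condition fails for a map $g$ exactly when there are $y\in Y$, a point $z\in P\cap f^{-1}(y)$ and a point $w\in Q\cap f^{-1}(y)$ with $z\ne w$ and $g(z)=g(w)$. Fixing a countable base $\mathcal B$ for $X$ and choosing $U\ni z$, $V\ni w$ in $\mathcal B$ with $\overline U\cap\overline V=\varnothing$, this is equivalent to the existence of indices $i,j,l$ and such a pair $(U,V)$ for which $g(\overline U\cap P_i\cap X_l\cap f^{-1}(y))\cap g(\overline V\cap Q_j\cap X_l\cap f^{-1}(y))\ne\varnothing$ for some $y$. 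Hence the set of admissible $g$ is the intersection, over all data $(i,j,l,U,V)$ with $\overline U\cap\overline V=\varnothing$, of the sets $G(i,j,l,U,V)$ of maps whose two compact fiberwise images are disjoint over every $y$. This is a countable intersection, and it already encodes the full statement: taking both $z,w$ in $P$ yields injectivity of $g$ on every $P\cap f^{-1}(y)$, while taking $z\in P$ and $w$ anywhere in $Q$ yields the required separation.

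Each $G(i,j,l,U,V)$ is open in $C(X,M)$. Over the perfect piece $X_l$ the two sets $\overline U\cap P_i\cap X_l$ and $\overline V\cap Q_j\cap X_l$ are closed with compact $f$-fibers, so for a map in $G(i,j,l,U,V)$ the disjointness of the corresponding images over each $y$ has a positive, lower-semicontinuous gap; minorizing it by a continuous positive function on $Y$ and pulling back along $f$ produces a continuous $\e\colon X\to(0,1]$ such that every $g$ with $\varrho(g,g_0)<\e$ still has disjoint images over every fiber. Since $M$ is completely metrizable, $C(X,M)$ is a Baire space, so the admissible maps form a dense $G_\delta$ once each $G(i,j,l,U,V)$ is shown to be dense.

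Density is the heart of the matter and the only step that is not formal: given $g_0$ and a continuous $\e\colon X\to(0,1]$, one must perturb $g_0$ within $\e$ so that, uniformly in the $\le m$-dimensional parameter $y$, the image of the $\le p$-dimensional fiber $\overline U\cap P_i\cap X_l\cap f^{-1}(y)$ becomes disjoint from the image of the $\le q$-dimensional fiber $\overline V\cap Q_j\cap X_l\cap f^{-1}(y)$. This is exactly the parametric general-position step of the proof of Theorem 1.1: the $\NDDP[m]{q,p}$-property supplies the local disjoining of a $q$-parameter family from a $p$-parameter family over a small neighborhood in $Y$, while the $\LC[2m+q-1]$-property together with completeness of $M$ allows these local adjustments to be assembled into a single global $\e$-small perturbation. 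I expect this density/general-position step to be the main obstacle; the reductions above, the openness, and the Baire combination are routine.
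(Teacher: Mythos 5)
Your overall scheme is essentially the paper's own: you correctly see that quoting the \emph{conclusion} of Theorem 1.1 for $f|Q$ is insufficient (set-wise injectivity in dimension $q-p$ does not exclude identifying a point of $P$ with another point of $Q$ when $p<q$), and you instead re-run the Baire-category machinery behind Theorem 1.1 with countably many fiberwise disjointness conditions, each open (the lower semicontinuous gap argument, i.e.\ Lemma 2.1/Corollary 2.2) and dense (Proposition 2.6, which indeed applies with your parameters, since your couples satisfy $\dim f|A\le p$ and $\dim f|B\le q$ and $M$ has the $\NDDP[m]{q,p}$-property). The paper does the same thing, except that it restricts to $Q$ and pulls the dense $G_\delta$-sets back along the open restriction map $\pi_Q$, whereas you work directly in $C(X,M)$; that difference is immaterial.

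There is, however, one genuine gap: your countability reduction starts by ``fixing a countable base $\mathcal B$ for $X$''. Nothing in the hypotheses makes $X$ separable --- $X$ and $Y$ are arbitrary metric spaces, so for instance $Y$ an uncountable discrete space and $X=Y\times\mathbb I^n$ with $f$ the projection satisfy every assumption, yet $X$ has no countable base. In that case your family of pairs $(U,V)$ is uncountable, which destroys both the Baire-category argument and the $G_\delta$ conclusion. This is exactly the point where the paper has to do real work: since each $f|X_l$ is a perfect map between metrizable spaces, Pasynkov's theorem \cite[Proposition 9.1]{pa} gives $W(f|X_l)\le\aleph_0$, i.e.\ maps $h_l\colon X_l\to\mathbb I^{\aleph_0}$ injective on every fiber; taking the diagonal product $h$ of the $h_l$ and pulling back a countable, finitely additive base $\{U_j\}$ of $\mathbb I^{\aleph_0}$, one obtains a \emph{countable} family of closed sets $h^{-1}(\overline U_j)$ that separates points within all fibers simultaneously. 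Your separating couples must be built from intersections of these sets with the closed pieces of $P$ and $Q$ (this is what the paper means by ``following the notations from the proof of Theorem 1.1''), not from a base of $X$. With that replacement your argument closes and coincides with the paper's proof.
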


Note that when $p=q=n$ and $M$ having the $\HDD[m]{n,n}$-property, Corollary 1.2 was established in \cite[Theorem 3.3]{bv}. We already mentioned that
the space $\mathbb R^l$ has the $\NDDP[m]{n,k}$-property for all $m,n,k$ with $m+n+k<l$. Hence,
Corollary 1.2 is a far reaching generalization of Pasynkov's result \cite{pa2} stating that for any map $f:X\to Y$ between metrizable compacta the function space $C(X,\mathbb R^{\dim Y+2\dim f+1})$ contains a dense $G_\delta$-subset of maps that are injective on every fiber of $f$.

When $M$ is compact and $C(X,M)$ is equipped with the uniform convergence topology, analogues of Theorem 1.1 and Corollary 1.2 also hold.
Let us formulate the analogue of Theorem 1.1.
\begin{thm}
Let $M$ be a compact metric $\LC[2m+n-1]$-space with the
$\NDDP[m]{n,k}$-property with $k\leq n$ and $f\colon X\to Y$ be a closed surjective $n$-dimensional map between normal spaces such that $\dim Y\leq m$ and $W(f)\leq\aleph_0$. Then $C(X,M)$ equipped with the uniform convergence topology contains a dense subset of maps $g$ such that all restrictions $g|f^{-1}(y)$, $y\in Y$, are set-wise injective in dimension $n-k$.
\end{thm}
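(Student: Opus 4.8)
The plan is to deduce Theorem 1.3 from Theorem 1.1 by a factorization argument that trades the non-metrizable spaces $X,Y$ for separable metrizable ones, exploiting both the compactness of $M$ and the hypothesis $W(f)\le\aleph_0$. Fix a map $g\in C(X,M)$ and a constant $\e>0$; since $M$ is compact, the uniform convergence topology on $C(X,M)$ is induced by the complete metric $\sup_x\varrho(g(x),h(x))$, so it suffices to produce a single $g'$ with $\sup_x\varrho(g'(x),g(x))<\e$ all of whose fibre restrictions are set-wise injective in dimension $n-k$. Because $M$ is separable, $g$ carries only countably much ``metric data'', and because $W(f)\le\aleph_0$ the map $f$ admits a countable network adapted to its fibres. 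Combining these two countable families of continuous pseudometrics and passing to the associated quotients, I would build a commutative diagram
\[
\begin{CD}
X @>{p_X}>> X_0\\
@V{f}VV @VV{f_0}V\\
Y @>{p_Y}>> Y_0
\end{CD}
\]
in which $X_0,Y_0$ are separable metrizable, $f_0$ is a $\sigma$-perfect surjection with $\dim f_0\le n$ and $\dim Y_0\le m$, the map $g$ factors as $g=g_0\circ p_X$ for some $g_0\in C(X_0,M)$, and $p_X$ restricts to a homeomorphic embedding of each fibre $f^{-1}(y)$ into $f_0^{-1}(p_Y(y))$. Producing such a factorization, with all dimensional bounds and the $\sigma$-perfectness of $f_0$ respected simultaneously, is the technical heart of the argument and is where the hypotheses on $f$ are really spent.

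Granting the diagram, $M$ still satisfies the $\LC[2m+n-1]$ and $\NDDP[m]{n,k}$ hypotheses, and $f_0\colon X_0\to Y_0$ is a $\sigma$-perfect surjective $n$-dimensional map between separable metric spaces with $\dim Y_0\le m$. Theorem 1.1 therefore applies to $f_0$ and yields a dense $G_\delta$-subset of $C(X_0,M)$ consisting of maps whose every fibre restriction is set-wise injective in dimension $n-k$. Since the source limitation topology refines the uniform one, this set is dense for uniform convergence as well, so it meets the uniform $\e$-ball about $g_0$; choose $g_0'$ there and put $g'=g_0'\circ p_X$. Then
\[
\sup_{x\in X}\varrho\big(g'(x),g(x)\big)=\sup_{x\in X}\varrho\big(g_0'(p_X(x)),g_0(p_X(x))\big)\le\sup_{x_0\in X_0}\varrho\big(g_0'(x_0),g_0(x_0)\big)<\e,
\]
so $g'$ lies in the prescribed uniform neighbourhood of $g$.

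It remains to transfer set-wise injectivity from $f_0$ to $f$. Fix $y\in Y$, set $y_0=p_Y(y)$, and let $A,B\subset f^{-1}(y)$ be closed with $\dim(A\setminus B)\ge n-k$. Because $p_X$ embeds $f^{-1}(y)$ homeomorphically into $f_0^{-1}(y_0)$, we have $p_X(A)\setminus p_X(B)=p_X(A\setminus B)$, so $p_X(A),p_X(B)$ are relatively closed subsets of $f_0^{-1}(y_0)$ with $\dim\big(p_X(A)\setminus p_X(B)\big)=\dim(A\setminus B)\ge n-k$; hence $g_0'(p_X(A))\ne g_0'(p_X(B))$ by the choice of $g_0'$, and since $g'=g_0'\circ p_X$ this gives $g'(A)\ne g'(B)$. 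Thus every restriction $g'|f^{-1}(y)$ is set-wise injective in dimension $n-k$, and letting $g$ and $\e$ vary produces the desired dense subset of $C(X,M)$. The main obstacle is the first step: one must arrange the factorization so that $p_X$ is fibrewise injective (which is exactly what $W(f)\le\aleph_0$ should buy), so that $p_X$ carries closed subsets of a fibre to \emph{relatively closed} subsets of the corresponding fibre of $f_0$ (here the closedness of $f$, through the compactness of fibre boundaries, is what keeps the dimension count honest), and so that $\dim f_0\le n$, $\dim Y_0\le m$ and the $\sigma$-perfectness of $f_0$ all hold at once.
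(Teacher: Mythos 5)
Your overall plan---factor $f$ and the given map through a metrizable situation, apply Theorem 1.1 there, and pull the result back along a map that embeds the fibers of $f$---is exactly the strategy of the paper. But the step you explicitly defer (``the technical heart'') is the entire mathematical content of the proof, and as it stands this is a genuine gap. Your sketch of obtaining the square by ``combining countable families of continuous pseudometrics and passing to the associated quotients'' is the standard opening move for factorization results, but it does not by itself produce a factorization satisfying \emph{simultaneously} $\dim Y_0\le m$, $\dim f_0\le n$, $\sigma$-perfectness of $f_0$, and fiberwise injectivity of $p_X$. Controlling the dimension of the quotient base and, especially, the dimension of the factored \emph{map} at the same time is precisely Pasynkov's factorization theorem \cite[Theorem 13]{pa1}, a nontrivial theorem rather than a routine quotient construction; without invoking it (or reproving it), your argument has no content at its key point. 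Note also that your plan never uses the compactness of $M$, which should make you suspicious: that hypothesis is needed to extend the data over compactifications, which is how the paper actually reaches a setting where Pasynkov's theorem applies.

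Concretely, the paper fills the gap as follows. Let $\lambda\colon X\to\mathbb{I}^{\aleph_0}$ witness $W(f)\le\aleph_0$, and pass to the Stone--\v{C}ech extensions $\overline{f}\colon\beta X\to\beta Y$, $\overline{g}_0\colon\beta X\to M$, $\overline{\lambda}\colon\beta X\to\mathbb{I}^{\aleph_0}$ (this is where compactness of $M$ and of $\mathbb{I}^{\aleph_0}$ is spent). Pasynkov's theorem applied to the map $\overline{\lambda}\triangle\overline{g}_0\colon\beta X\to\mathbb{I}^{\aleph_0}\times M$ yields metrizable \emph{compacta} $K,T$ and maps $f_*\colon K\to T$, $\xi_1\colon\beta X\to K$, $\xi_2\colon K\to\mathbb{I}^{\aleph_0}\times M$, $\eta_1\colon\beta Y\to T$ with $\eta_1\circ\overline{f}=f_*\circ\xi_1$, $\xi_2\circ\xi_1=\overline{\lambda}\triangle\overline{g}_0$, $\dim T\le\dim\beta Y=\dim Y\le m$ (normality of $Y$), and $\dim f_*\le\dim\overline{f}\le n$ (by \cite[Proposition 8]{pa1}). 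Two features of this construction answer the difficulties you flagged. First, the fiberwise injectivity you hoped $W(f)\le\aleph_0$ would ``buy'' is obtained by factoring the \emph{diagonal} $\overline{\lambda}\triangle\overline{g}_0$ rather than $\overline{g}_0$ alone: since $p\circ\xi_2\circ\xi_1$ restricts to $\lambda$ on $X$ and $\lambda$ embeds each fiber $f^{-1}(y)$, the map $\xi_1$ must embed each fiber into $f_*^{-1}(\eta_1(y))$. Second, because $K$ and $T$ are compact, $f_*$ is automatically perfect (no $\sigma$-perfectness needs to be arranged) and the uniform and source limitation topologies on $C(K,M)$ coincide, so Theorem 1.1 gives a map $\phi\colon K\to M$ that is $\epsilon$-close to $q\circ\xi_2$ with every $\phi|f_*^{-1}(t)$ set-wise injective in dimension $n-k$; then $g=(\phi\circ\xi_1)|X$ is $\epsilon$-close to $g_0$ and its fiber restrictions inherit the injectivity through the embeddings $\xi_1|f^{-1}(y)$, exactly as in your transfer step.
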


Recall that $W(f)\leq\aleph_0$ means that there exists a map $g:X\to\mathbb I^{\aleph_0}$ such that $f\triangle g$ embeds $X$ into $Y\times\mathbb I^{\aleph_0}$, see \cite{pa}. For example, according to \cite[Proposition 9.1]{pa}, $W(f)\leq\aleph_0$ for every closed map $f$ between metrizable spaces provided $f$ has Lindel\"{o}f fibers.

We apply Corollary 1.2 to provide a short proof of the following result:
\begin{pro}
Suppose $n,k$ are non-negative integers such that $k+1\leq n$. Then the product $M\times\mathbb R^{2l+1}$, where $l=n-k-1$, has the $\NDDP[m]{n,n}$-property
for every complete separable metric $\LC[2m+n-1]$-space $M$ with the $\NDDP[m]{n,k}$-property.
\end{pro}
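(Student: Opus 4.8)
The plan is to deduce the property from Corollary 1.2 by a disjoint-union device together with a general-position argument in the Euclidean factor. Write $N=M\times\mathbb{R}^{2l+1}$ and first record that $N$ is a complete separable metric $\LC[2m+n-1]$-space, since both factors are and each of these properties is preserved under finite products ($\mathbb{R}^{2l+1}$ being $\LC[j]$ for every $j$). Moreover $N$ inherits the $\NDDP[m]{n,k}$-property from $M$: given two maps into $N$, separate their $M$-coordinates fiberwise by the property of $M$ and leave the $\mathbb{R}^{2l+1}$-coordinates untouched; disjointness of the $M$-images already forces disjointness of the images in $N$. To verify that $N$ has the $\NDDP[m]{n,n}$-property I would fix maps $F,G\colon\mathbb{I}^m\times\mathbb{I}^n\to N$, form the disjoint union $X=(\mathbb{I}^m\times\mathbb{I}^n)\sqcup(\mathbb{I}^m\times\mathbb{I}^n)$ with the projection $f\colon X\to Y=\mathbb{I}^m$, and combine $F,G$ into a single $H\colon X\to N$. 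Then $f$ is perfect, surjective and $n$-dimensional with $\dim Y=m$, so $X,Y,f$ satisfy the hypotheses of Theorem 1.1, and the required approximation of $F,G$ by maps with fiberwise disjoint images is exactly the assertion that $H$ can be approximated by maps whose restriction to each fiber $f^{-1}(z)$ separates the two halves $\{z\}\times\mathbb{I}^n\times\{1\}$ and $\{z\}\times\mathbb{I}^n\times\{2\}$.

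Next I would triangulate the model cube $\mathbb{I}^n$ and apply Corollary 1.2 with target $N$, which has the $\NDDP[m]{n,k}$-property, taking $Q=X$ (so $q=n$) and $P=(\mathbb{I}^m\times\sigma)\sqcup(\mathbb{I}^m\times\sigma)$, where $\sigma$ is the $k$-dimensional skeleton of $\mathbb{I}^n$ (so $p=k$, and indeed $\dim(P\cap f^{-1}(z))\le k$, $\dim(Q\cap f^{-1}(z))\le n$). Corollary 1.2 then yields a dense $G_\delta$-set of maps $H'\colon X\to N$ for which $H'^{-1}(H'(w))\cap Q\cap f^{-1}(z)=\{w\}$ whenever $w\in P\cap f^{-1}(z)$. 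In particular no point lying over one $k$-skeleton is glued by $H'$ to any other point of its fiber, so every fiberwise coincidence between the two halves is confined to pairs $(s,t)$ with $s,t\notin\sigma$, i.e. to $(\mathbb{I}^n\setminus\sigma)\times(\mathbb{I}^n\setminus\sigma)$. Since $\mathbb{I}^n\setminus\sigma$ deformation retracts onto the dual skeleton of dimension $n-k-1=l$, the residual coincidences are governed, fiberwise, by two $l$-dimensional complexes.

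It remains to destroy these residual coincidences using the extra $\mathbb{R}^{2l+1}$-coordinates. Composing the retraction above with a general-position perturbation of the $\mathbb{R}^{2l+1}$-components, I would push the images of the two dual $l$-skeleta apart: two $l$-dimensional sets can be mapped into $\mathbb{R}^{2l+1}$ with disjoint images because $l+l<2l+1$, and this is precisely the role of the dimension $2l+1=2(n-k)-1$. Intersecting this (dense) condition with the dense $G_\delta$-set produced by Corollary 1.2 and invoking the Baire property of $C(X,N)$ would give the desired approximations of $H$, hence of $F,G$. The hard part is this last step: executed naively it would require $\mathbb{R}^{2l+1}$ to separate $l$-cells from $l$-cells \emph{parametrically} over the $m$-dimensional base $Y$, which it cannot do once $m\ge 1$. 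The point is therefore to interleave the Euclidean adjustment with the parametric information already supplied by Corollary 1.2 — whose hypothesis $\dim Y\le m$ has absorbed the parameter through the $\NDDP[m]{n,k}$-property of $N$ — so that the separation of the residual dual skeleta need only be performed fiberwise. Making this interleaving precise, and checking that the two genericity conditions are simultaneously realizable in the source limitation topology, is where the main work lies.
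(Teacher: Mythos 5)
Your opening reductions coincide with the paper's own: the disjoint--sum device $X=\mathbb I^m\times(\mathbb I^n\oplus\mathbb I^n)$ with the projection $\pi$ onto $Y=\mathbb I^m$, an application of Corollary 1.2 to destroy all fiberwise coincidences emanating from a ``small'' set $P$, and a general-position perturbation of the $\mathbb R^{2l+1}$-coordinate to destroy the rest. The genuine gap is your choice of $P$ as $\mathbb I^m\times\sigma$ with $\sigma$ the $k$-skeleton of a triangulation of $\mathbb I^n$. Corollary 1.2 says nothing about points outside $P$, so after that step the residual coincidences are only known to lie in $X\setminus P$, a set that is fiberwise $n$-dimensional and totally $(m+n)$-dimensional; a generic map into $\mathbb R^{2l+1}$ can be made injective only on sets of covering dimension at most $l$, so no perturbation of the Euclidean coordinate can finish from this position --- and this failure already occurs for $m=0$, so it is not really a parametric difficulty. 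The deformation retraction of $\mathbb I^n\setminus\sigma$ onto the dual $l$-skeleton does not repair this: the perturbed map does not factor through the retraction, and a homotopy equivalence gives no bound on the dimension of the coincidence set, which is exactly what the general-position step requires. So the step you defer as ``the main work'' cannot be carried out from the skeletal decomposition.

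The paper's proof replaces the skeleton by a dimension-theoretic decomposition: by Torunczyk's theorem \cite{to} (see also \cite{tv}) there is an $F_\sigma$-set $F\subset X$ with $\dim\pi|F\le k$ \emph{and} $\dim(X\setminus F)\le n-k-1=l$; the crucial feature, which no skeleton can provide, is the bound on the \emph{total} dimension of the complement. Corollary 1.2 is then applied only to the $M$-coordinate $\varphi_1$ of the combined map (keeping the $\mathbb R^{2l+1}$-coordinate in reserve), with $P=F$ and $Q=X$, yielding $\phi_1$ close to $\varphi_1$ such that no point of $F$ is identified under $\phi_1$ with any other point of its fiber. The set $\widetilde F\supset F$ of points that are alone in their fiber under $\pi\triangle\phi_1$ is $G_\delta$ because $\pi\triangle\phi_1$ is a closed map, so the non-injectivity set $P'=X\setminus\widetilde F$ is $\sigma$-compact and, being contained in $X\setminus F$, satisfies $\dim P'\le l$. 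Hence a single generic perturbation $\phi_2$ of the Euclidean coordinate is injective on all of $P'$ --- globally, not fiberwise --- and $\phi_1\times\phi_2$ then separates the two copies of $\mathbb I^n$ over every $z\in\mathbb I^m$. This dissolves the difficulty you flag: no fiberwise separation of $l$-dimensional sets in $\mathbb R^{2l+1}$ over the $m$-dimensional base is ever needed, because the residual set has small total, not merely fiberwise, dimension.
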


The paper is organised as follows: all preliminary results are provided in Section 2, Section 3 contains the proofs of Theorem 1.1, Corollary 1.2 and Theorem 1.3. The proof of Proposition 1.4 is given in the Appendix.

\section{Some preliminary results}
In this section we suppose that the spaces $X,Y,M$ and the map $f\colon X\to Y$ satisfy the conditions from Theorem 1.1 with the additional assumption that the map $f$ is a perfect surjection.
Since $M$ is $\LC[2m+n-1]$, according to \cite[Lemma 4.1]{bv1}, $M$ admits a metric $\rho$
generating its topology and satisfying the following condition:
\begin{itemize}
\item[(1)] If $Z$ is an $(2m+n)$-dimensional metric space, $A\subset Z$ its closed set and
$h\colon Z\to M$ a map, then for every function $\alpha:Z\to(0,1]$ and
every map $g\colon A\to M$ with $\rho(g(z),h(z))<\alpha(z)/8$ for
all $z\in A$ there exists a map $\bar{g}\colon Z\to M$ extending $g$
such that $\rho(\bar{g}(z),h(z))<\alpha(z)$ for all $z\in Z$.
\end{itemize}

One can easily show that $(1)$ implies the following condition:
\begin{itemize}
\item[(2)] If $F\subset X$ is a closed set, the restriction map $\pi_F:C(X,M)\to C(F,M)$, $\pi_F(g)=g|F$, is an open and surjective map when
both $C(X,M)$ and $C(X,M)$ carry the source limitation topology.
\end{itemize}
For any set $K\subset Y$ and closed disjoint sets $A,B\subset X$  let denote by $C_K(X,M;A,B)$ the set of all $g\in C(X,M)$ such that:
\begin{itemize}
\item $g(A\cap f^{-1}(y))\cap g(B\cap f^{-1}(y))=\varnothing$ for every $y\in K$. If $K=Y$, we write $C(X,M;A,B)$ instead of $C_K(X,M;A,B)$.
\end{itemize}


The aim of this section is to show that all sets $C_K(X,M;A,B)$ are open and dense in $C(X,M)$ with respect to
the source limitation topology. Our proofs are based on some ideas from \cite{mv} and \cite{v3}.

\begin{lem}
Let $K\subset Y$ be closed and $g_0\in C_K(X,M;A,B)$, where $A,B$ are disjoint closed subsets of $X$. Then there is a continuous function $\alpha:X\to (0,1]$ and an open set
$W\subset Y$ containing $K$ such that $g\in C_W(X,M;A,B)$ provided $g\in C(X,M)$ and $\rho(g(x),g_0(x))<\alpha(x)$ for all $x\in X$.
\end{lem}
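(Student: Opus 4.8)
The plan is to show that the fiberwise separation enjoyed by $g_0$ is both stable under small perturbations and persists on a neighbourhood of $K$, and to reduce everything to an estimate on $\rho$-distances. Write $A_y=A\cap f^{-1}(y)$ and $B_y=B\cap f^{-1}(y)$. If $g\in C(X,M)$ satisfies $\rho(g(x),g_0(x))<\alpha(x)$ for all $x$, then for every $a\in A_y$ and $b\in B_y$ the triangle inequality gives $\rho(g(a),g(b))>\rho(g_0(a),g_0(b))-\alpha(a)-\alpha(b)$, the strictness coming from the source–limitation inequality. Hence it suffices to produce an open $W\supset K$ and a continuous $\alpha\colon X\to(0,1]$ with $\alpha(a)+\alpha(b)\le\rho(g_0(a),g_0(b))$ whenever $a\in A$, $b\in B$ and $f(a)=f(b)\in W$; this already forces $g(a)\ne g(b)$, i.e. $g\in C_W(X,M;A,B)$.

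Second, I would introduce the fiberwise distance $\psi\colon Y\to(0,\infty]$ defined by $\psi(y)=\dist\big(g_0(A_y),g_0(B_y)\big)$, with the convention $\dist(\cdot,\varnothing)=+\infty$. Because $f$ is perfect, $A_y$ and $B_y$ are compact, so for $y\in K$ their $g_0$-images are disjoint compacta and $\psi(y)>0$. Since the required estimate depends only on the common value $y=f(a)=f(b)$, it is enough to find $W$ together with a continuous $\beta\colon Y\to(0,1]$ satisfying $2\beta(y)\le\psi(y)$ on $W$, and then set $\alpha=\beta\circ f$.

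The crux, and the step I expect to be the main obstacle, is to prove that $\psi$ is lower semicontinuous; this is precisely where perfectness of $f$ is essential. Given $y_i\to y_0$ with $\psi(y_i)\to c<\infty$, each relevant fiber pair is nonempty, so I can choose $a_i\in A_{y_i}$ and $b_i\in B_{y_i}$ with $\rho(g_0(a_i),g_0(b_i))<\psi(y_i)+1/i$. The set $C=\{y_0\}\cup\{y_i:i\in\IN\}$ is compact, hence $f^{-1}(C)$ is compact; passing to a subsequence yields $a_i\to a_*\in A_{y_0}$ and $b_i\to b_*\in B_{y_0}$, whence $\psi(y_0)\le\rho(g_0(a_*),g_0(b_*))=c$. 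Thus $\liminf_{y\to y_0}\psi(y)\ge\psi(y_0)$, so $\psi$ is lower semicontinuous and $W_0:=\{y\in Y:\psi(y)>0\}$ is open and contains $K$.

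Finally, on the open, hence paracompact, set $W_0$ the function $\psi$ is lower semicontinuous and strictly positive, so the local positive lower bounds provided by lower semicontinuity, patched together by a partition of unity, yield a continuous $\beta_0\colon W_0\to(0,1]$ with $2\beta_0\le\psi$. Using normality of $Y$ I would then pick an open $W$ with $K\subset W\subset\overline W\subset W_0$ and a Urysohn function $\lambda\colon Y\to[0,1]$ equal to $1$ on $\overline W$ and to $0$ off $W_0$; the function $\beta:=\lambda\beta_0+(1-\lambda)$ (with $\lambda\beta_0$ read as $0$ where $\lambda=0$) is continuous on $Y$, since $\beta_0\le1$ forces $\lambda\beta_0\to0$ along $\partial W_0$, it is strictly positive everywhere, and it agrees with $\beta_0$ on $W$. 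Setting $\alpha=\beta\circ f$ produces the desired continuous function, completing the argument. The only genuinely nontrivial point is the lower semicontinuity of $\psi$; the remaining constructions are routine applications of the paracompactness and normality of the metric spaces involved.
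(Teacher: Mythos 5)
Your proof is correct, and it fills in a step the paper leaves implicit. Both arguments share the same skeleton: show that the fiberwise separation of $g_0(A_y)$ and $g_0(B_y)$ persists (quantitatively) over a neighborhood of $K$, convert that quantitative bound into a continuous positive function on $Y$, and pull it back along $f$ to get $\alpha$. The difference is in how each step is executed. The paper asserts without proof (``one can show'') that every $y\in K$ admits a neighborhood $V_y$ and a constant $\delta_y$ such that any $\delta_y$-perturbation of $g_0$ stays fiberwise separating over all of $V_y$, and then patches the constants $\delta_y$ into a continuous function by applying the Repov\v{s}--Semenov selection theorem to the lower semicontinuous set-valued map $y\mapsto\cup\{(0,\delta_z]:y\in V_z\}$. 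You instead work with the explicit separation function $\psi(y)=\dist\big(g_0(A_y),g_0(B_y)\big)$, prove its lower semicontinuity directly from perfectness of $f$ (compactness of $f^{-1}(C)$ for compact $C$ plus a convergent-subsequence argument), and then minorize $\psi$ by a continuous positive function via a partition of unity and a Urysohn cut-off. Your lower-semicontinuity argument is precisely the content hidden in the paper's unproved local claim, and your continuous-minorant step is the elementary fact underlying the selection theorem the paper cites; so your version is more self-contained and makes explicit where perfectness of $f$ enters, at the cost of being longer. Two minor points in your favor that are worth keeping: the convention $\dist(\cdot,\varnothing)=+\infty$ correctly handles fibers that miss $A$ or $B$, and extended-real-valued lower semicontinuity suffices to make $W_0=\{\psi>0\}$ open, which is all the argument needs.
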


\begin{proof} One can show that for every $y\in K$ there exists a neighborhood $V_y\subset Y$ of $y$ and a positive
number $\delta_y\leq 1$ such that $y'\in V_y$ and $\rho(g(x),g_0(x))<\delta_y$ for all $x\in f^{-1}(y')$, where $g\in C(X,M)$, yields $g\in C_{y'}(X,M;A,B)$.
Let $V=\bigcup_{y\in K}V_y$ and $W\subset Y$ be an open set containing $K$ with $\overline W\subset V$.
The family
$\{V_y:y\in K\}$ can be supposed to be locally finite in $V$.
Consider the set-valued lower semi-continuous map $\varphi\colon\overline W\to (0,1]$, $\varphi(y)=\cup\{(0,\delta_z]:y\in V_z\}$. By
\cite[Theorem 6.2, p.116]{rs}, $\varphi$ admits a continuous
selection $\beta\colon\overline W\to (0,1]$. Let $\overline{\beta}:Y\to
(0,1]$ be a continuous extension of $\beta$ and
$\alpha=\overline{\beta}\circ f$. The set $W$ is the required one.
\end{proof}

\begin{cor}
Each set $C_K(X,M;A,B)$ is open in $C(X,M)$.
\end{cor}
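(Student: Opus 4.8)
The plan is to obtain openness as an immediate consequence of Lemma 2.3, the only additional ingredient being the evident monotonicity of the sets $C_K(X,M;A,B)$ in the parameter $K$. First I would fix an arbitrary $g_0\in C_K(X,M;A,B)$ and apply Lemma 2.3 (with $K$ closed, as required by its hypotheses): this produces a continuous function $\alpha\colon X\to(0,1]$ together with an open set $W\subset Y$ containing $K$ such that every $g\in C(X,M)$ satisfying $\rho(g(x),g_0(x))<\alpha(x)$ for all $x\in X$ belongs to $C_W(X,M;A,B)$.

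The key point to record is the inclusion $C_W(X,M;A,B)\subseteq C_K(X,M;A,B)$. This is purely set-theoretic: since $K\subset W$, the defining requirement that $g(A\cap f^{-1}(y))\cap g(B\cap f^{-1}(y))=\varnothing$ is imposed over a larger index set for $C_W$ than for $C_K$, so membership in $C_W$ is the stronger condition. Combining this with the previous step shows that the set $\{g\in C(X,M):\rho(g(x),g_0(x))<\alpha(x)\text{ for all }x\in X\}$ is contained in $C_K(X,M;A,B)$.

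Finally, because $\alpha$ is a continuous function into $(0,1]$, this set is precisely a basic neighborhood of $g_0$ in the source limitation topology; hence $g_0$ is an interior point of $C_K(X,M;A,B)$, and since $g_0$ was arbitrary the set is open. I do not anticipate any genuine obstacle here: all of the analytic content — extracting the pointwise moduli $(V_y,\delta_y)$ and assembling them into a single continuous $\alpha$ and an enlarged open index set $W$ via the Michael selection theorem — has already been absorbed into Lemma 2.3, so what remains is only the trivial inclusion $C_W\subseteq C_K$ and the definition of the topology.
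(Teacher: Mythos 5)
Your proof is correct and follows essentially the same route as the paper: the paper's proof of this corollary also consists of applying Lemma 2.1 to $g_0$ and observing that the resulting ball $B_\rho(g_0,\alpha)$ lies in $C_K(X,M;A,B)$ (via the inclusion $C_W\subseteq C_K$ for $K\subset W$, which the paper leaves implicit and you spell out). The only cosmetic discrepancy is your label ``Lemma 2.3''; the lemma you invoke is Lemma 2.1 in the paper.
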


\begin{proof}
Let $g_0\in C_K(X,M;A,B)$. By Lemma 2.1, there exists a function $\alpha:X\to (0,1]$ such that $g\in C_K(X,M;A,B)$ for any $g\in C(X,M)$
satisfying the inequality $\rho(g(x),g_0(x))<\alpha(x)$ for all $x\in X$. Then $B_\rho(g_0,\alpha)$ is a neighborhood of $g_0$ and
$B_\rho(g_0,\alpha)\subset C_K(X,M;A,B)$. 
\end{proof}

Next step is to show that if $K\subset Y$ is closed, $A$ and $B$ are disjoint closed subsets of $X$ with $\dim f|A\leq k$, then $C_K(X,M;A,B)$ is dense in $C(X,M)$. To this end we need some preliminary results. The first one is the following characterization of spaces with the $\NDDP[m]{n,k}$-property, which can be obtain from the proof
of \cite[Theorem 5.7]{bv}:
\begin{pro} Let $m,n,k$ be non-negative integers and $d=m+\max\{n,k\}$.
A Polish
$\LC[d-1]$-space $M$ has the $\NDDP[m]{n,k}$-property if
and only if for any separable polyhedron $P$
with $\dim P\le m$ there are two disjoint
$\sigma$-compact sets $E_n,E_k\subset P\times M$
such that $E_n\in\MAP[P]{n}$ and $E_k\in\MAP[P]{k}$.
\end{pro}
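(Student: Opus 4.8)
The plan is to follow the proof of \cite[Theorem 5.7]{bv}, which establishes the analogous characterization for the $\HDD[m]{n,k}$-property, and to observe that the argument simplifies once the $\U$-homotopy requirement is dropped, since the $\NDDP[m]{n,k}$-property demands only a disjointness-after-approximation conclusion. I would prove the two implications separately. Throughout I work with \emph{fibered} (fiber-preserving) maps into $P\times M$, that is maps commuting with the projection onto $P$, and I treat the case $P=\mathbb I^m$ first, reducing a general separable polyhedron $P$ with $\dim P\le m$ to it by triangulating $P$ and arguing simplex by simplex, so that the fibered approximations obtained over the cube transfer and glue over $P$.

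For sufficiency, suppose disjoint $\sigma$-compact sets $E_n\in\MAP[P]{n}$ and $E_k\in\MAP[P]{k}$ exist. Given maps $f\colon\mathbb I^m\times\mathbb I^n\to M$ and $g\colon\mathbb I^m\times\mathbb I^k\to M$, I would form their fibered lifts $\tilde f=(\pr,f)$ and $\tilde g=(\pr,g)$ into $\mathbb I^m\times M$, then use the absorption property $\MAP[P]{n}$ to approximate $\tilde f$ by a fibered map with image in $E_n$, and $\MAP[P]{k}$ to approximate $\tilde g$ by a fibered map with image in $E_k$. Composing with the projection $\pr\colon\mathbb I^m\times M\to M$ produces $f'$ close to $f$ and $g'$ close to $g$. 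Since both approximants are fiber-preserving and $E_n\cap E_k=\varnothing$, for each $z\in\mathbb I^m$ the sets $f'(\{z\}\times\mathbb I^n)$ and $g'(\{z\}\times\mathbb I^k)$ lie in the disjoint slices $E_n\cap(\{z\}\times M)$ and $E_k\cap(\{z\}\times M)$, hence are disjoint; this is exactly the $\NDDP[m]{n,k}$-property.

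For necessity, I would construct $E_n$ and $E_k$ by an inductive diagonalization. Since $M$ is Polish and the cubes are compact, the relevant space of fibered maps is separable, so one may fix a countable family of pairs of fibered maps that is dense, and at the $i$-th step apply the $\NDDP[m]{n,k}$-property to the $M$-components of the $i$-th pair to perturb them into fibered maps whose fiberwise images are disjoint. The metric $\rho$ supplied by condition $(1)$, together with the $\LC[d-1]$-hypothesis where $d=m+\max\{n,k\}$, furnishes the controlled extensions needed to make these perturbations converge; the controlling function $\alpha$ of condition $(1)$ replaces the $\U$-homotopy bookkeeping present in the $\HDD$ argument. Taking $E_n$ to be the union of the ranges of the perturbed $n$-fibered maps and $E_k$ the corresponding union of ranges of the $k$-fibered maps, both are $\sigma$-compact as countable unions of compact images, are disjoint by construction, and absorb every fibered map in the respective dimension, i.e. $E_n\in\MAP[P]{n}$ and $E_k\in\MAP[P]{k}$. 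The main obstacle is precisely this simultaneous bookkeeping: one must maintain fiberwise disjointness of the two growing sets at every stage while still guaranteeing that their final unions absorb \emph{every} $n$- (respectively $k$-) dimensional fibered map and remain $\sigma$-compact. This is the heart of the construction in \cite[Theorem 5.7]{bv}, and the only modification required here is that, lacking the homotopy constraint of the $\HDD[m]{n,k}$-property, the perturbations may be taken to be plain $\alpha$-approximations rather than $\U$-homotopies, which leaves the rest of that argument intact.
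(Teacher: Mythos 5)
Your overall strategy coincides with the paper's: the paper's entire proof of this proposition is the remark that it "can be obtained from the proof of \cite[Theorem 5.7]{bv}", i.e.\ exactly your observation that dropping the $\U$-homotopy requirement only simplifies that argument. Your sufficiency half is also correct as written: taking $P=\mathbb I^m$, applying the $\MAP[P]{n}$- and $\MAP[P]{k}$-properties to the two coordinate projections (with $F=\varnothing$), and noting that disjointness of $E_n$ and $E_k$ is inherited by their slices over each $z\in\mathbb I^m$, yields the $\NDDP[m]{n,k}$-property; no triangulation of a general $P$ is even needed for this direction.

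The necessity half, however, would fail as you describe it, on two concrete points. First, perturbing the $i$-th pair of your dense family only makes the $i$-th $n$-image fiberwise disjoint from the $i$-th $k$-image; nothing controls the $i$-th $n$-image against the $j$-th $k$-image for $j\neq i$, so the unions you form are \emph{not} "disjoint by construction". Repairing this requires arranging disjointness for all cross pairs $(i,j)$ simultaneously --- for instance by a Baire-category argument in a countable product of function spaces, where for each pair $(i,j)$ the set of tuples whose $(i,j)$-coordinates have disjoint fibered graphs is open (by compactness of the images) and dense (by a \emph{joint} perturbation of both coordinates); any inductive scheme that instead perturbs one new map off the already-fixed $\sigma$-compact union needs a removability/Z-set-type ingredient that the bare $\NDDP[m]{n,k}$-property applied to a pair does not supply. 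Second, and more seriously, $E_n\in\MAP[P]{n}$ quantifies over \emph{all} $n$-dimensional maps $p\colon K\to P$ from arbitrary finite-dimensional metric compacta $K$, together with the relative condition $g'|F=g|F$; a countable dense family of fibered maps on cube products over $P$ is not a dense family of such test data (the relevant class of domains $K$ is not countable up to any useful equivalence), so the union of ranges you build has no reason to absorb every such $(K,p,g,F,\delta)$. In \cite{bv} this is precisely where universal (invertible) $n$-dimensional maps over $P$ enter, reducing absorption to a countable family; that ingredient, and the rel-$F$ control, are absent from your sketch. Deferring these difficulties wholesale to \cite{bv} is admittedly what the paper itself does, but then one should not assert the intermediate construction you spell out, since as stated it does not produce sets with the required properties.
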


The notation $E_n\in\MAP[P]{n}$ means that
for any
$n$-dimensional map $p:K\to P$ with $K$ being a finite-dimensional
metric compactum, a closed subset $F\subset K$, a map $g:K\to M$, and
a positive $\delta$ there is a map $g':K\to M$ such that $g'$
is $\delta$-close to $g$, $g'|F=g|F$ and $(p\triangle g')(K\setminus
F)\subset E_n$.

To prove the density of the sets $C_K(X,M;A,B)$, where $A,B$ are disjoint closed subsets of $X$ with $\dim f|A\leq k$, we fix a map $g_0:X\to M$ and a function $\e:X\to (0,1]$. Define the set-valued map
$$\Phi_\e:Y\to 2^{C(X,M)}{~}\mbox{by}{~}\Phi_\e(y)=B_\rho(g_0,\e)\cap C_y(X,M;A,B),$$ where $C(X,M)$ carries the compact-open topology.

\begin{lem} All $\Phi_\e(y)$ are non-empty sets. Moreover,
if $\Phi_\e(y_0)$ contains a compact set $K$ for some $y_0\in Y$,
then there exists a neighborhood $V(y_0)$ of $y_0$ such that
$K\subset\Phi_\e(y)$ for every $y\in V(y_0)$.
\end{lem}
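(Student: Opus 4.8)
\emph{Approach.} The plan is to treat the two assertions separately. For non\-emptiness I fix $y\in Y$ and exploit that, on a single fibre, the parametrising polyhedron collapses to a point, so Proposition 2.3 becomes available with $m=0$. Set $A_y=A\cap f^{-1}(y)$ and $B_y=B\cap f^{-1}(y)$; since $f$ is perfect these are disjoint compacta, and $\dim A_y\le k$ (because $\dim f|A\le k$) while $\dim B_y\le n$. I would apply Proposition 2.3 to a one\-point polyhedron $P$ (so $\dim P=0\le m$) to obtain disjoint $\sigma$\-compact sets $E_n,E_k\subset P\times M$, which I identify with subsets of $M$, with $E_n\in\MAP[P]{n}$ and $E_k\in\MAP[P]{k}$. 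Feeding the constant maps $B_y\to P$ and $A_y\to P$ (which are $n$\- resp. $k$\-dimensional), together with $g_0|B_y$, $g_0|A_y$ and empty fixed sets, into the definitions of $\MAP[P]{n}$, $\MAP[P]{k}$, I obtain maps $g_B\colon B_y\to E_n$ and $g_A\colon A_y\to E_k$ as close to $g_0$ as I wish. As $A_y,B_y$ are disjoint compacta their union carries the combined map $h$, and $h(A_y)\subset E_k$, $h(B_y)\subset E_n$ are disjoint. Finally, since $\dim X\le\dim Y+\dim f\le m+n\le 2m+n$, condition $(1)$ extends $h$ to $\bar g\in C(X,M)$ with $\rho(\bar g,g_0)<\e$, provided the approximations were taken within $\min_{A_y\cup B_y}\e/8>0$; then $\bar g\in B_\rho(g_0,\e)\cap C_y(X,M;A,B)=\Phi_\e(y)$.

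\emph{The ``moreover'' part.} I would reformulate the conclusion as a closedness statement and invoke the tube lemma. Put $Z=\{(a,b)\in A\times B:f(a)=f(b)\}$ and $p\colon Z\to Y$, $p(a,b)=f(a)$. Because $f$ is perfect, so are $f|A$, $f|B$ and $f|A\times f|B$; as $Z$ is the preimage of the diagonal in $Y\times Y$, the map $p$ is perfect, with fibres $p^{-1}(y)=A_y\times B_y$. Consider $\tilde D=\{(g,(a,b))\in K\times Z:g(a)=g(b)\}$ and its image $D=(\id_K\times p)(\tilde D)\subset K\times Y$; one checks $D=\{(g,y):g(A_y)\cap g(B_y)\neq\varnothing\}$. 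Since $\id_K\times p$ is again perfect, hence closed, once $\tilde D$ is known to be closed in $K\times Z$ it follows that $D$ is closed in $K\times Y$. The hypothesis $K\subset\Phi_\e(y_0)\subset C_{y_0}(X,M;A,B)$ says exactly $(K\times\{y_0\})\cap D=\varnothing$, so the compact slice $K\times\{y_0\}$ lies in the open set $(K\times Y)\setminus D$; the tube lemma then yields an open $V\ni y_0$ with $(K\times V)\cap D=\varnothing$, which is precisely $K\subset\Phi_\e(y)$ for all $y\in V$.

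\emph{The main obstacle.} The step I expect to be hardest is the closedness of $\tilde D$, i.e.\ that $(g,(a,b))\mapsto(g(a),g(b))$ is continuous on $K\times Z$; this amounts to joint continuity of the evaluation map $K\times X\to M$. For a general base the compact\-open topology need not make evaluation jointly continuous, but here $X$ is metrizable and $K$ is compact, and that suffices. Indeed, if joint continuity failed at some $(g_*,x_0)$, then using a countable base at $x_0$ I could choose $x_i\to x_0$ and $g_i\in K$ with $g_i(x_0)\to g_*(x_0)$ yet $g_i(x_i)$ remaining outside a fixed neighbourhood of $g_*(x_0)$. The set $C=\{x_0\}\cup\{x_i:i\in\IN\}$ is compact, on it the compact\-open topology coincides with the uniform one, and the restriction $K\to C(C,M)$ has compact image; extracting a uniformly convergent subsequence of $\{g_i|C\}$ forces $g_i(x_i)\to g_*(x_0)$, a contradiction. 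Hence evaluation is jointly continuous on $K\times X$, $\tilde D$ is closed, and the closed\-map argument above goes through.
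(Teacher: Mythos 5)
Your proof is correct, and it is considerably more self-contained than the paper's, which disposes of both halves largely by citation. For non-emptiness the paper follows the same skeleton you do --- approximate on the fiber, then extend over $X$ via the metric property $(1)$ --- but it obtains the fiber-level approximation by invoking the proof of Lemma 3.4 of \cite{km}: since $M$ is $\LC[n-1]$ and has the disjoint $(n,k)$-cells property (that is, $\NDDP[0]{n,k}$) and $\dim (A\cap f^{-1}(y))\leq k$, the maps $h\in C(f^{-1}(y),M)$ with $h(A\cap f^{-1}(y))\cap h(B\cap f^{-1}(y))=\varnothing$ are dense in $C(f^{-1}(y),M)$. You instead extract the same kind of approximation from Proposition 2.3 applied to a one-point polyhedron, pushing $g_0|A_y$ into $E_k$ and $g_0|B_y$ into $E_n$ with empty fixed sets; this is a legitimate reading of the $\MAP[P]{k}$ and $\MAP[P]{n}$ definitions (the constant maps $A_y\to P$ and $B_y\to P$ are $k$- resp.\ $n$-dimensional exactly because $\dim f|A\leq k$ and $\dim f\leq n$), and it keeps the argument inside the paper's own toolkit rather than quoting \cite{km}. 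For the ``moreover'' half the paper says only that it ``can be established following the proof of Lemma 2.5(2) from \cite{mv}''; you supply a complete argument: $p\colon Z\to Y$ is perfect (being the restriction of the perfect map $f|A\times f|B$ to the preimage of the diagonal), hence $\id_K\times p$ is perfect and $D=(\id_K\times p)(\tilde D)$ is closed once $\tilde D$ is, and the tube lemma finishes. The one delicate point, joint continuity of the evaluation $K\times X\to M$, you settle by a sequential Ascoli-type extraction, which is sound because $X$ is metrizable and $K$ is compact in the compact-open topology, so the restrictions of members of $K$ to the compact set $\{x_0\}\cup\{x_i:i\in\IN\}$ form a compact, hence sequentially compact, subset of the space of maps on that set with the uniform metric (which there coincides with the compact-open topology). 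Two cosmetic remarks only: at the very end you should note that $K\subset\Phi_\e(y)$ also requires $K\subset B_\rho(g_0,\e)$, which holds automatically since that ball does not depend on $y$; and the degenerate case $A_y\cup B_y=\varnothing$, where $g_0$ itself witnesses non-emptiness and the slice condition is vacuous, deserves a word. Neither affects correctness.
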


\begin{proof}
Since $M$ is an $\LC[n-1]$-space with the disjoint $(n,k)$-cells property and $\dim f^{-1}(y)\cap A\leq k$, the set of all maps $h\in C(f^{-1}(y),M)$ with
$h(A\cap f^{-1}(y))\cap h(B\cap f^{-1}(y))=\varnothing$
is dense in $C(f^{-1}(y),M)$ (see the proof of Lemma 3.4 from \cite{km}). So, if $\delta_y=\min\{\e(x):x\in f^{-1}(y)\}$, then there exists such a map
$h\in C(f^{-1}(y),M)$ with $\rho(h,g_0|f^{-1}(y))<\delta_y/8$. Then, by the extension property $(1)$, $h$ can
be extended to a map $g\in C(X,M)$ such that $\rho(g,g_0)<\e$. Obviously $g\in C_y(X,M;A,B)$, so
$\Phi(y)\neq\varnothing$ for all $y\in Y$.

The second part of that lemma can be established following the proof of Lemma 2.5(2) from \cite{mv}.
\end{proof}

\begin{lem} Every
$\Phi_\e(y)$ has the following property: If
$\hat{v}\colon\mathbb{S}^p\to\Phi_\e(y)$ is continuous, where $p\leq m-1$
and $\mathbb{S}^p$ is the $p$-sphere, then $\hat{v}$ can be
extended to a continuous map
$\displaystyle\hat{u}\colon\mathbb{I}^{p+1}\to\Phi_{16\e}(y)$.
\end{lem}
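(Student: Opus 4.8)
The plan is to fix the compact fiber $F=f^{-1}(y)$, to carry out all the surgery over $F$, and to control everything through the extension property $(1)$ and the $\MAP$-sets supplied by Proposition $2.3$. Put $A_0=A\cap F$ and $B_0=B\cap F$; these are disjoint compacta with $\dim A_0\leq k$ (because $\dim f|A\leq k$) and $\dim B_0\leq\dim F\leq n$. Since $F$ is compact, the compact-open continuous map $\hat{v}$ gives a genuinely jointly continuous $V\colon\mathbb{S}^p\times F\to M$, $V(s,x)=\hat{v}(s)(x)$, with $\rho(V(s,x),g_0(x))<\e(x)$ and $V(s,A_0)\cap V(s,B_0)=\varnothing$ for all $s$. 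First I would manufacture a reference extension: applying $(1)$ to the partial map $(s,x)\mapsto\hat{v}(s)(x)$ on the closed set $\mathbb{S}^p\times X\subset\mathbb{I}^{p+1}\times X$, with target $(t,x)\mapsto g_0(x)$ and $\alpha(t,x)=8\e(x)$, I obtain $W\colon\mathbb{I}^{p+1}\times X\to M$ extending it with $\rho(W(t,x),g_0(x))<8\e(x)$. This is legitimate because $\dim(\mathbb{I}^{p+1}\times X)\leq(p+1)+\dim X\leq m+(m+n)=2m+n$, using $p+1\leq m$ and $\dim X\leq\dim Y+n\leq m+n$.

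The heart of the argument is the disjointification. I apply Proposition $2.3$ with $P=\mathbb{I}^{p+1}$ (here $\dim P=p+1\leq m$, and $M$ is Polish and $\LC[2m+n-1]$, hence $\LC[m+n-1]$ with $d-1=m+n-1$ since $k\leq n$) and obtain disjoint $\sigma$-compact sets $E_n,E_k\subset\mathbb{I}^{p+1}\times M$ with $E_n\in\MAP[\mathbb{I}^{p+1}]{n}$ and $E_k\in\MAP[\mathbb{I}^{p+1}]{k}$. The product projections $\mathbb{I}^{p+1}\times B_0\to\mathbb{I}^{p+1}$ and $\mathbb{I}^{p+1}\times A_0\to\mathbb{I}^{p+1}$ are $n$- and $k$-dimensional maps of finite-dimensional compacta, so the $\MAP$-property lets me perturb $W$ on $\mathbb{I}^{p+1}\times B_0$ into $E_n$ and on $\mathbb{I}^{p+1}\times A_0$ into $E_k$, keeping it fixed over $\mathbb{S}^p=\partial\mathbb{I}^{p+1}$ and moving it by less than a constant $\delta<\min_{x\in F}\e(x)$, so that the graphs $\mathrm{pr}\triangle W'$ over $\mathrm{int}\,\mathbb{I}^{p+1}$ land in $E_n$, respectively $E_k$. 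Since $A_0\cap B_0=\varnothing$ and both perturbations agree with $W$ over $\mathbb{S}^p$, they patch with $W|_{\mathbb{S}^p\times X}$ into a continuous map $U_0$ on the closed set $D=(\mathbb{S}^p\times X)\cup(\mathbb{I}^{p+1}\times(A_0\cup B_0))$ that is within $\delta$ of $W$ and equals $W$ off the fiber.

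Finally I extend $U_0$ over $\mathbb{I}^{p+1}\times X$ relative to $W$: since $\rho(U_0,W)<\e(x)$ on $D$ (it is $0$ off the fiber and $<\delta<\min_F\e$ on the fiber part), condition $(1)$ with target $W$ and $\alpha=8\e$ yields $\hat{U}$ agreeing with $U_0$ on $D$ and satisfying $\rho(\hat{U},W)<8\e$, whence $\rho(\hat{U}(t,x),g_0(x))<16\e(x)$ everywhere. Setting $\hat{u}(t)=\hat{U}(t,\cdot)$ gives a compact-open continuous map $\hat{u}\colon\mathbb{I}^{p+1}\to C(X,M)$ extending $\hat{v}$ with values in $B_\rho(g_0,16\e)$. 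For interior $t$ one has $\hat{u}(t)(A_0)\subset(E_k)_t$ and $\hat{u}(t)(B_0)\subset(E_n)_t$, where $(E_j)_t=\{w:(t,w)\in E_j\}$, and these are disjoint because $E_k\cap E_n=\varnothing$; for $t\in\mathbb{S}^p$ the disjointness is inherited from $\hat{v}$. Hence $\hat{u}(t)\in C_y(X,M;A,B)$, so $\hat{u}(t)\in\Phi_{16\e}(y)$, as required.

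I expect the main obstacle to lie in the very first reduction, namely in passing from the compact-open continuous $\hat{v}$ to a continuous partial map on $\mathbb{S}^p\times X$ (equivalently, in guaranteeing that the family stays compact-open continuous once it leaves the compact fiber), because for a non--locally compact $X$ joint continuity does not follow from compact-open continuity. This is exactly where I would invoke condition $(2)$: the openness of the restriction $\pi_F\colon C(X,M)\to C(F,M)$, together with the controlled extension property $(1)$, is what promotes the fiberwise construction to an honest compact-open continuous family over all of $X$ that still restricts to $\hat{v}$ on $\mathbb{S}^p$. The remaining difficulty is purely bookkeeping: threading the argument through a single reference map $W$ and one relative re-extension, each costing the factor $8$ coming from condition $(1)$, is precisely what lands the output in $\Phi_{16\e}(y)$ rather than in a larger ball.
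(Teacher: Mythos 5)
Your construction is, in all essentials, the paper's own proof: pass from $\hat v$ to the associated map on $\mathbb S^p\times X$, extend it over $\mathbb I^{p+1}\times X$ via property (1) (first factor $8$), take the disjoint sets $E_k\in\MAP[\mathbb I^{p+1}]{k}$ and $E_n\in\MAP[\mathbb I^{p+1}]{n}$ from Proposition 2.3 with $P=\mathbb I^{p+1}$, perturb separately over $\mathbb I^{p+1}\times A_y$ (into $E_k$) and $\mathbb I^{p+1}\times B_y$ (into $E_n$) keeping everything fixed over $\mathbb S^p$, glue, and re-extend by (1) (second factor $8$, landing in $\Phi_{16\e}(y)$), with fiberwise disjointness coming from $E_k\cap E_n=\varnothing$ over the interior and inherited from $\hat v$ over $\mathbb S^p$. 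Your bookkeeping with a constant $\delta<\min_{x\in f^{-1}(y)}\e(x)$ is a sound (in fact slightly cleaner) substitute for the paper's $\delta_y$, and your dimension count $\dim(\mathbb I^{p+1}\times X)\le 2m+n$ is identical to the paper's.

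However, your closing paragraph misdiagnoses the one step you were unsure about. The passage from compact-open continuity of $\hat v$ to joint continuity of $v(s,x)=\hat v(s)(x)$ on all of $\mathbb S^p\times X$ is not an obstacle, and condition (2) is not what handles it. Since $\mathbb S^p\times X$ is metrizable, hence sequential, it suffices to check continuity along a convergent sequence $(s_i,x_i)\to(s_0,x_0)$; the set $K=\{x_i:i\ge 1\}\cup\{x_0\}$ is compact in $X$, the evaluation map $C(X,M)\times K\to M$ is continuous for the compact-open topology, so $v$ is continuous on $\mathbb S^p\times K$ and in particular $v(s_i,x_i)\to v(s_0,x_0)$. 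This is precisely the exponential-law remark with which the paper opens its proof of this lemma: for metrizable factors no local compactness of $X$ is required. Condition (2) --- openness of restriction maps in the source limitation topology --- plays no role in this lemma; it is used later, in the proofs of Theorem 1.1 and Corollary 1.2, to pull dense $G_\delta$-sets back along restriction maps, and it could not supply joint continuity here in any case, since $\Phi_\e(y)$ and the maps $\hat v$, $\hat u$ live in the compact-open topology.
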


\begin{proof} Let us mention
the following property of the function space $C(X,M)$ with the
compact open topology: For any metrizable space $Z$ a map
$\hat{w}\colon Z\to C(X,M)$ is continuous if and only if the map
$w\colon Z\times X\to M$, $w(z,x)=\hat{w}(z)(x)$, is continuous.
Hence, every map $\hat{v}\colon\mathbb{S}^p\to\Phi_\e(y)$ generates
a continuous map $v\colon\mathbb{S}^p\times X\to M$ defined by
$v(z,x)=\hat{v}(z)(x)$ such that
$\rho\big(v(z,x),g_0(x)\big)<\e(x)$ for all
$(t,x)\in\mathbb{S}^p\times X$.

Define the maps $\overline{g}_0:\mathbb I^{p+1}\times X\to M$ and $\overline{\e}:\mathbb I^{p+1}\times X\to (0,1]$
by $\overline{g}_0(t,x)=g_0(x)$ and $\overline{\e}(t,x)=\e(x)$ for all $t\in\mathbb I^{p+1}$.
Since $X$ admits a perfect $n$-dimensional map onto the $m$-dimensional space $Y$, $\dim X\leq n+m$, see \cite{re:95}.
Hence, $\dim(\mathbb I^{p+1}\times X)\leq 2m+n$. Then, according to the extension property $(1)$, $v$ can be extended to a map
$v_1:\mathbb{I}^{p+1}\times X\to M$ such that $\rho(v_1,\overline{g}_0)<8\overline{\e}$. Let $A_y=A\cap f^{-1}(y)$, $B_y=B\cap f^{-1}(y)$.
Denote
by $v_{1,A}:\mathbb I^{p+1}\times A_y\to M$ and  $v_{1,B}:\mathbb I^{p+1}\times B_y\to M$, respectively,
the restrictions $v_1|(\mathbb I^{p+1}\times A_y)$ and $v_1|(\mathbb I^{p+1}\times B_y)$.
By Proposition 2.3, there exist two disjoint subsets $E_{k}$ and $E_n$ of $\mathbb I^{p+1}\times M$ such that $E_n\in\MAP[\mathbb I^{p+1}]{n}$ and $E_{k}\in\MAP[\mathbb I^{p+1}]{k}$. Applying the $(\MAP[\mathbb I^{p+1}]{k})$-property of $E_{k}$ with respect to the projection $\pi_A:\mathbb I^{p+1}\times A_y\to\mathbb I^{p+1}$, we find a map $h_A:\mathbb I^{p+1}\times A_y\to M$ satisfying the following conditions,
where $\delta_y=\min\{8\e(x)-\rho(v_1(t,x),g_0(x)):(t,x)\in\mathbb I^{p+1}\times f^{-1}(y)\}$:
\begin{itemize}
\item[(3)] $h_A|(\mathbb S^{p}\times A_y)=v_1|(\mathbb S^{p}\times A_y)$;
\item[(4)] $\rho(h_A,v_{1,A})<\delta_y$;
\item[(5)] $\pi_A\triangle h_A((\mathbb I^{p+1}\setminus\mathbb S^p)\times A_y)\subset E_{k}$.
\end{itemize}

Applying the $(\MAP[\mathbb I^{p+1}]{n})$-property of $E_{n}$ with respect to the projection $\pi_B:\mathbb I^{p+1}\times B_y\to\mathbb I^{p+1}$, we obtain a map $h_B:\mathbb I^{p+1}\times B_y\to M$ such that
\begin{itemize}
\item[(6)] $h_B|(\mathbb S^{p}\times B_y)=v_1|(\mathbb S^{p}\times B_y)$;
\item[(7)] $\rho(h_B,v_{1,B})<\delta_y$;
\item[(8)] $\pi_B\triangle h_B((\mathbb I^{p+1}\setminus\mathbb S^p)\times B_y)\subset E_{n}$.
\end{itemize}
Consider now the map $h:F\to M$, where $F=(\mathbb S^p\times X)\cup (\mathbb I^{p+1}\times A_y)\cup(\mathbb I^{p+1}\times B_y)$, such that
$h|(\mathbb S^p\times X)=v_1|(\mathbb S^p\times X)$, $h|(\mathbb I^{p+1}\times A_y)=h_A$ and  $h|(\mathbb I^{p+1}\times B_y)=h_B$.
Observed that $\rho(h(t,x),v_1(t,x))<\e(x)$ for all $(t,x)\in F$. So, using again the extension property $(1)$, we extend the map $h$ to a map
$\widetilde h:\mathbb I^{p+1}\times X\to M$ with $\rho(\widetilde h,v_1)<8\overline{\e}$. Because $\rho(v_1,\overline{g}_0)<8\overline{\e}$, we
have $\rho(\widetilde h,\overline{g}_0)<16\overline{\e}$. Then $\widetilde h$ provides a map $\hat{u}:\mathbb I^{p+1}\to C(X,M)$, defined by
$\hat{u}(t)(x)=\widetilde h(t,x)$, such that $\hat{u}(t)\in B_\rho(g_0,16\e)$ for all $t\in \mathbb I^{p+1}$.

It remains to show that $\hat{u}(\mathbb I^{p+1})\subset\Phi_{16\e}(y)$. To this end, observe that conditions
$(5)$ and $(8)$ imply $\widetilde h(\{t\}\times A_y)\cap\widetilde h(\{t\}\times B_y)=\varnothing$ for all $t\in\mathbb I^{p+1}\setminus\mathbb S^{p}$.
Because $\widetilde h|(\mathbb S^{p}\times f^{-1}(y))=v|(\mathbb S^{p}\times f^{-1}(y))$ and $\hat{v}(t)\in C_y(X,M;A,B)$,
$\widetilde h(\{t\}\times A_y)\cap\widetilde h(\{y\}\times B_y)=\varnothing$ for any $t\in\mathbb S^{p}$. Therefore,
$\widetilde h(\{t\}\times A_y)\cap\widetilde h(\{t\}\times B_y)=\varnothing$ for all $t\in\mathbb I^{p+1}$. The last condition yields
$\hat{u}(\mathbb I^{p+1})\subset C_y(X,M;A,B)$. Hence, $\hat{u}(\mathbb I^{p+1})\subset\Phi_{16\e}(y)$.
\end{proof}

\begin{pro} $C_K(X,M;A,B)$ is a dense subset of $C(X,M)$
with respect to the source limitation topology for every closed $K\subset Y$.
\end{pro}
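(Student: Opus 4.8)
The plan is to show that, for an arbitrary $g_0\in C(X,M)$ and an arbitrary continuous $\e\colon X\to(0,1]$, the basic neighbourhood $B_\rho(g_0,\e)$ meets $C_K(X,M;A,B)$; this is precisely density in the source limitation topology. The engine will be a finite-dimensional selection theorem applied to the set-valued map $\Phi_\e$ of the previous lemmas, followed by the observation that a continuous selection over $K$ collapses to a single map on $X$ upon evaluating along the fibres of $f$. All enlargements of the control function incurred below — a factor $16$ from each of the at most $m+1$ applications of Lemma 2.5 and a further factor $8$ from the extension property $(1)$ — amount to a single fixed constant; writing $N_0=16^{m+1}$ for the part coming from Lemma 2.5 and $N=8N_0$ for the total, it suffices to produce a map in $B_\rho(g_0,N\e)\cap C_K(X,M;A,B)$, since running the argument with $\e/N$ in place of $\e$ then gives density in $B_\rho(g_0,\e)$.

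I would first verify the hypotheses of a finite-dimensional selection theorem for $\Phi_\e$ over the set $K$, which is closed in $Y$ and hence satisfies $\dim K\le\dim Y\le m$, with $C(X,M)$ carrying the compact-open topology. Lemma 2.4 furnishes the topological input: each value $\Phi_\e(y)$ is non-empty, and any compact subset of a fibre persists into all sufficiently near fibres, which is exactly the lower-semicontinuity needed to assemble a selection inductively over the skeleta of the nerve of a locally finite cover of $K$. Lemma 2.5 furnishes the connectivity: any map of a $p$-sphere with $p\le m-1$ into $\Phi_\e(y)$ bounds a disk in the enlarged value $\Phi_{16\e}(y)$, so the family is $C^{m-1}$ up to the controlled enlargement. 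Feeding these into a Michael-type finite-dimensional selection theorem, in the enlargement-controlled form used in \cite{mv} and \cite{v3}, I obtain a continuous map $\sigma\colon K\to C(X,M)$ with $\sigma(y)\in\Phi_{N_0\e}(y)=B_\rho(g_0,N_0\e)\cap C_y(X,M;A,B)$ for every $y\in K$.

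To collapse $\sigma$ into one global map, recall from the proof of Lemma 2.5 that $\sigma$ is equivalent to a continuous map $(y,x)\mapsto\sigma(y)(x)$ on $K\times X$; hence $g_1(x)=\sigma(f(x))(x)$ is a continuous map on the closed set $f^{-1}(K)$ with $\rho(g_1,g_0)<N_0\e$ there. Since $g_1|f^{-1}(y)=\sigma(y)|f^{-1}(y)$ and $\sigma(y)\in C_y(X,M;A,B)$, we get $g_1\big(A\cap f^{-1}(y)\big)\cap g_1\big(B\cap f^{-1}(y)\big)=\varnothing$ for every $y\in K$. As $\dim X\le m+n\le 2m+n$ and $N_0\e=(N\e)/8$, the extension property $(1)$ — equivalently, the openness and surjectivity of the restriction map in $(2)$ — extends $g_1$ to a map $g\in C(X,M)$ with $\rho(g,g_0)<N\e$; the values of $g$ off $f^{-1}(K)$ do not bear on membership in $C_K(X,M;A,B)$. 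Thus $g\in B_\rho(g_0,N\e)\cap C_K(X,M;A,B)$, and the opening rescaling yields the claimed density.

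The step I expect to be hardest is the selection itself: matching the compact-persistence of Lemma 2.4 and the enlargement-type connectivity of Lemma 2.5 to the precise hypotheses of a finite-dimensional selection theorem over $K$ — in particular making the skeleton-by-skeleton construction work despite $C(X,M)$ not being a well-behaved metric space globally — and tracking the accumulated enlargement so that the selection stays within the prescribed ball. By contrast, once $\sigma$ is in hand the identity $g(x)=\sigma(f(x))(x)$ reduces the fibrewise separation condition to a single application of the extension property, and is the structural heart of the reduction.
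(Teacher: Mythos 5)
Your proposal is correct and follows essentially the same route as the paper: verify the hypotheses of a finite-dimensional selection theorem (the paper invokes the proof of Gutev's theorem, with Lemma 2.4 giving compact-persistence and Lemma 2.5 the controlled sphere-filling), obtain a continuous selection $\theta$ over $K$, collapse it to $g(x)=\theta(f(x))(x)$ on $f^{-1}(K)$, and extend by property $(1)$. The only difference is bookkeeping: the paper pre-scales by defining the nested maps $\Phi_i(y)=\Phi_{\e/16^{m-i+1}}(y)$ so the selection lands in $\Phi_{\e/16}$ and the extension stays within $\e$, whereas you accumulate the factor $N=8\cdot16^{m+1}$ and rescale at the outset --- an equivalent device.
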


\begin{proof} Define the set-valued maps $\Phi_i\colon K\to
C(X,M)$, $i=0,..,m$,
$\displaystyle\Phi_i(y)=\Phi_{\e/16^{m-i+1}}(y)$. Obviously,
$\Phi_0(y)\subset\Phi_1(y)\subset...\subset\Phi_m(y)=\Phi_{\e/16}(y)$.
According to Lemma 2.5, every map from $\mathbb{S}^p$ into
$\Phi_i(y)$ can be extended to a map from $\mathbb{I}^{p+1}$ into
$\Phi_{i+1}(y)$, where $p\leq m-1$, $i=0,1,..,m-1$ and $y\in K$. Moreover, by
Lemma 2.4, any $\Phi_i(y)$ has the following property: if
$P\subset\Phi_i(y)$ is compact, then there exists a neighborhood
$V_y$ of $y$ in $Y$ such that $P\subset\Phi_i(z)$ for all $z\in
V_y\cap K$. So, we may apply the proof of \cite[Theorem 3.1]{gu} to find a
continuous selection $\theta\colon K\to C(X,M)$ of $\Phi_m$. Hence,
$\theta(y)\in\Phi_{\e/16}(y)$ for all $y\in K$. Now, consider the map
$g\colon f^{-1}(K)\to M$, $g(x)=\theta(f(x))(x)$. Using that
$C(X,M)$ carries the compact open topology, one can show that $g$ is
continuous. Moreover, $\varrho\big(g(x),g_0(x)\big)<\e(x)/16$ for all
$x\in f^{-1}(K)$. Then, by $(1)$, $g$ can be extended to a
continuous map $\bar{g}\colon X\to M$ with
$\varrho\big(\bar{g}(x),g_0(x)\big)<\e(x)$, $x\in X$. It follows
from the definition of $g$ that $g|f^{-1}(y)=\theta(y)|f^{-1}(y)$
for every $y\in K$. Since $\theta(y)\in C_y(X,M;A,B)$, $\bar{g}(A_y)\cap\bar{g}(B_y)=\varnothing$ for all
$y\in K$. Hence, $\bar{g}\in B_\varrho(g_0,\e)\cap C_K(X,M;A,B)$.
\end{proof}

\section{Proofs}
\textit{Proof of Theorem $1.1$.} Let $X$ be the union of an increasing sequence $\{X_i\}_{i\geq 1}$ of closed sets such that each restriction $f_i=f|X_i$ is a perfect map.
So, according to condition $(2)$, the restriction maps $\pi_i:C(X,M)\to C(X_i,M)$ are open surjections when both $C(X,M)$ and $C(X_i,M)$ are equipped with
the source limitation topology. Hence, by Corollary 2.2 and Proposition 2.6, the sets $\pi_i^{-1}(C(X_i,M;A\cap X_i,B\cap X_i;f_i))$ are open and dense in
$C(X,M)$ for any $i$, where $A$ and $B$ are closed disjoint subsets of $X$ with $\dim f|A\leq k$. Here, $C(X_i,M;A\cap X_i,B\cap X_i;f_i)$ is the set of all
$g\in C(X_i;M)$ such that $g(A\cap f_i^{-1}(y))\cap g(B\cap f_i^{-1}(y))=\varnothing$ for all $y\in f_i(X_i)$. Similarly, $C(X,M;A,B;f)$ denotes the set
of the maps $g\in C(X,M)$ with $g(A\cap f^{-1}(y))\cap g(B\cap f^{-1}(y))=\varnothing$ for all $y\in Y$.
Since
$$C(X,M;A,B;f)=\bigcap_{i=1}^{\infty}\pi_i^{-1}(C(X_i,M;A\cap X_i,B\cap X_i;f_i)),$$  any $C(X,M;A,B;f)$ is a dense $G_\delta$-subset of $C(X,M)$.

Suppose first that $k\leq n-1$. Since $f$ is $\sigma$-perfect and $\dim f\leq n$, there exist closed subsets $F_i\subset X$, $i=1,2,..$, such that $\dim F_i\leq k$ for each $i$ and
the restriction $f|(X\setminus\bigcup_{i=1}^\infty F_i)$ is a map of dimension $\leq n-k-1$, see \cite[Theorem 1.4]{tv}. Because each $f_i$ is a perfect map, by \cite[Proposition 9.1]{pa}, there exist maps $h_i\colon X_i\to\mathbb I^{\aleph_0}$ embedding all fibers of $f_i$, $i\geq 1$. We can suppose that each $h_i$ is defined on $X$. Hence, the diagonal product $h$ of all $h_i$ is a map from $X$ into $\mathbb I^{\aleph_0}$ such that  $h|f^{-1}(y):f^{-1}(y)\to\mathbb I^{\aleph_0}$ is one-to one for all $y\in Y$. We fix a finitely additive base $\Gamma=\{U_j\}_{j\geq 1}$ for the topology of $\mathbb I^{\aleph_0}$ and consider the family $\mathcal A$ of all non-empty intersections
$h^{-1}(\overline U_j)\cap F_i$, $i,j=1,2,..$, and the family $\mathcal B=\{h^{-1}(\overline U_j)\}_{j\geq 1}$. Obviously, $\dim A\leq k$ for all $A\in\mathcal A$. We already observed that the sets
$C(X,M;A,B;f)$,  where $A\in\mathcal A$ and $B\in\mathcal B$ are disjoint, are dense and $G_\delta$ in $C(X,M)$ with respect to the source limitation topology.
Then the intersection $\mathcal S$ of all $C(X,M;A,B;f)$ is also a dense $G_\delta$-subset of $C(X,M)$.

Let us show that $\mathcal S$ consists of maps $g$ such that each restriction $g|f^{-1}(y)$, $y\in Y$, is set-wise injective in dimension $n-k$. Indeed, suppose $K_1\neq K_2$ are two non-trivial closed sets, which are contained in some $f^{-1}(y_0)$ and $\dim (K_2\setminus K_1)\geq n-k$.

\textit{Claim $1$. There is $x_0\in (K_2\setminus K_1)\cap (\bigcup_{i=1}^\infty F_i)$.}

Indeed, otherwise $K_2\setminus K_1\subset f^{-1}(y_0)\setminus(\bigcup_{i=1}^\infty F_i)$, which implies $\dim K_2\setminus K_1\leq n-k-1$, a contradiction.

Next claim completes the proof of Theorem 1.1 in the case $k\leq n-1$.

\textit{Claim $2$. $g(x_0)\not\in g(K_1)$ for all $g\in\mathcal S$.}

We fix $i_0$ with $x_0\in F_{i_0}$. Since $h(x_0)\in h(K_2)\setminus h(K_1\cap X_i)$ and $h(K_1\cap X_i)$ is a compact set for every $i$, there exist $U_{j_i}, U_{l_i}\in\Gamma$ such that $h(x_0)\in U_{j_i}$, $h(K_1\cap X_i)\subset U_{l_i}$ and
$\overline{U}_{j_i}\cap\overline U_{l_i}=\varnothing$ (recall that $\Gamma$ is finitely additive). Then $h^{-1}(\overline U_{j_i})$ and $B_i=h^{-1}(\overline U_{l_i})$ are also disjoint and
$K_1\cap X_i\subset B_i\cap f^{-1}(y_0)$. Moreover
$A_i=h^{-1}(\overline U_{j_i})\cap F_{i_0}\in\mathcal A$ and $x_0\in A_i$.
Consequently, $g(x_0)\not\in g(K_1\cap X_i)$ for all $g\in C(X,M;A_i,B_i;f)$ and all $i$. Finally, since $g(K_1)=\bigcup_{i=1}^\infty g(K_1\cap X_i)$, we have
$g(x_0)\in g(K_2)\setminus g(K_1)$.

Suppose now that $k=n$, and let $\Gamma=\{U_j\}_{j\geq 1}$ and $\mathcal B$ be as above. Then the intersection of all $C(X,M;A,B;f)$, where
$A,B\in\mathcal B$ are disjoint, is a dense $G_\delta$-subset of $C(X,M)$ and consists of maps $g$ such that the restrictions $g|f^{-1}(y)$, $y\in Y$, are set-wise injective in dimension $0$.
\hfill$\square$

\smallskip
\textit{Proof of Corollary $1.2$.} Suppose first that $Q\subset X$ is closed, and let $f_Q=f|Q$ and $Y_Q=f(Q)$. Obviously, $f_Q:Q\to Y_Q$ is a $\sigma$-perfect surjection with $\dim f_Q\leq q$. Then, we apply Theorem 1.1 (with $X, Y, f$ replaced, respectively, by $Q, Y_Q, f_Q$) to show the existence of a dense $G_\delta$-subset of $C(Q;M)$ of maps $g$ such that all restrictions $g|f_Q^{-1}(y)$, $y\in Y_Q$, are set-wise injective in dimension $q-p$. More precisely, following the notations from the proof of Theorem 1.1, we find countably many disjoint couples $(A_i,B_i)$ of closed subsets of $X$ satisfying the following conditions:
\begin{itemize}
\item $A_i,B_i\subset Q$;
\item Each $C(Q,M;A_i,B_i;f_Q)$ is a dense $G_\delta$-subset of $C(Q,M)$ and the intersection $\mathcal S_Q$ of all $C(Q,M;A_i,B_i;f_Q)$ consists of maps $g\in C(Q,M)$ such that $g|f_Q^{-1}(y)$, $y\in Y_Q$, is set-wise injective in dimension $q-p$;
\item If $p\leq q-1$, then for any $y\in Y_Q$ and any two different points $z\in f_Q^{-1}(y)\cap P$ and $x\in f_Q^{-1}(y)$ there exists a couple $(A_i,B_i)$ with $z\in A_i$ and $x\in B_i$;
 \item If $p=q$, then the couples $(A_i,B_i)$ are separating the points of $f_Q^{-1}(y)$ for all $y\in Y_Q$.
\end{itemize}
The last two properties yield that $\mathcal S_Q$ consists of maps $g\in C(Q,M)$ such that $g^{-1}(g(z))\cap f_Q^{-1}(y)=\{z\}$ for all $z\in P\cap f_Q^{-1}(y)$ and all $y\in Y_Q$. Let $\pi_Q:C(X,M)\to C(Q,M)$ be the restriction map. According to condition $(2)$, each set $\pi_Q^{-1}(C(Q,M;A_i,B_i;f_Q))$ is dense and
$G_\delta$ in $C(X,M)$. Then the set $\pi_Q^{-1}(\mathcal S_Q)$ is also dense and $G_\delta$ in $C(X,M)$, and consists of
maps $g$ such that $g^{-1}(g(z))\cap Q\cap f^{-1}(y)=\{z\}$ for all $z\in P\cap f^{-1}(y)$ and all $y\in Y$.

If $Q=\bigcup_{j=1}^\infty Q_j$ is an $F_\sigma$-subset of $X$, we consider the $\sigma$-perfect restrictions $f_j=f|Q_j$ and the spaces $Y_j=f_j(Q_j)$. As above, for each $j$ we find countably many couples $(A_i^j,B_i^j)$ of closed disjoint subsets of $Q_j$ such that the intersection
$\mathcal S_{Q_j}$ of all $C(Q_j,M;A_i^j,B_i^j;f_{j})$, $i\geq 1$, is dense and $G_\delta$ in $C(Q_j;M)$. Consequently,
$\mathcal S=\bigcap_{j=1}^\infty\pi_{Q_j}^{-1}(\mathcal S_{Q_j})$
is dense and $G_\delta$ in $C(X,M)$. It is easily seen that any $g\in\mathcal S$ satisfies the required condition
$g^{-1}(g(z))\cap Q\cap f^{-1}(y)=\{z\}$ for all $z\in P\cap f^{-1}(y)$ and all $y\in Y$.
\hfill$\square$

\smallskip
\textit{Proof of Theorem $1.3$.} We follow the approach from the proof of \cite[Theorem 1.2]{v2}.
Fix a map $g_0\colon X\to M$ and a number
$\epsilon>0$.
Since $W(f)\leq\aleph_0$, there
exists a map $\lambda\colon X\to\mathbb I^{\aleph_0}$ such that
$f\triangle\lambda\colon X\to Y\times\mathbb I^{\aleph_0}$ is an
embedding.  We are going to find a map $g\in C(X,M)$ such that $g$
is $\epsilon$-close to $g_0$ and all restrictions $g|f^{-1}(y)$,
$y\in Y$, are continuum-wise injective in dimension $n-k$. To this end, let
$\overline{\lambda}\colon\beta X\to\mathbb I^{\aleph_0}$,
$\overline{g}_0\colon\beta X\to M$ and $\overline f:\beta X\to\beta Y$ be the Stone-Cech extensions of
the maps $\lambda$, $g_0$ and $f$, respectively. Then
$\overline{\lambda}\triangle\overline{g}_0\in C(\beta X,\mathbb
I^{\aleph_0}\times M)$. We consider also the constant maps
$h\colon\mathbb I^{\aleph_0}\times M\to Pt$ and
$\eta\colon\beta Y\to Pt$, where $Pt$ is the one-point space.
According to Pasynkov's factorization theorem \cite[Theorem
13]{pa1}, there exist metrizable compacta $K$, $T$ and  maps
$f_{*}\colon K\to T$, $\xi_1\colon\beta X\to K$, $\xi_2\colon K\to
\mathbb I^{\aleph_0}\times M$ and $\eta_1\colon\beta Y\to T$ such
that:
\begin{itemize}
\item $\eta_1\circ\overline f=f_{*}\circ\xi_1$;
\item $\xi_2\circ\xi_1=\overline{\lambda}\triangle\overline{g}_0$;
\item $\dim T\leq\dim\beta Y$ and $\dim f_*\leq\dim\overline f$.
\end{itemize}
Since $Y$ is normal, $\dim\beta Y=\dim Y\leq m$. Moreover, by \cite[Proposition 8]{pa1}, $\dim f\leq n$ implies $\dim\overline f\leq n$.
If $p\colon\mathbb I^{\aleph_0}\times M\to\mathbb I^{\aleph_0}$ and
$q\colon\mathbb I^{\aleph_0}\times M\to M$ denote the corresponding
projections, we have
\begin{itemize}
\item $p\circ\xi_2\circ\xi_1=\overline{\lambda}\hbox{~}\mbox{and}\hbox{~}q\circ\xi_2\circ\xi_1=\overline{g}_0.$
\end{itemize}
By Theorem 1.1, there exists a map $\phi\colon K\to
M$ such that $\phi$ is $\epsilon$-close to $q\circ\xi_2$ and all restrictions $\phi|f_*^{-1}(t)$, $t\in T$, are
set-wise injective in dimension $n-k$. Then the
map $\overline{g}=\phi\circ\xi_1$ is $\epsilon$-close to
$\overline{g}_0$. Hence, the maps $g=\overline{g}|X$ and $g_0$ are
also $\epsilon$-close. Because
$\lambda=\big(p\circ\xi_2\circ\xi_1\big)|X$ embeds the fibers of $f$ into $\mathbb I^{\aleph_0}$,
$\xi_1$ embeds the fibers of $f$ into $K$ such that $f^{-1}(y)\subset f_*^{-1}(\eta_1(y))$ for all $y\in Y$. Therefore, the restrictions
$g|f^{-1}(y)$, $y\in Y$, are set-wise injective in dimension $n-k$. \hfill$\square$

\section{Appendix}
\textit{Proof of Proposition $1.4$.}
Let $f:\mathbb I^m\times\mathbb I^n\to M\times\mathbb R^{2l+1}$ and $g:\mathbb I^m\times\mathbb I^{n}\to M\times\mathbb R^{2l+1}$ be two maps. We are going to approximate $f$ and $g$ by maps $f':\mathbb I^m\times\mathbb I^n\to M\times\mathbb R^{2l+1}$ and $g':\mathbb I^m\times\mathbb I^{n}\to M\times\mathbb R^{2l+1}$ such that $f'(\{z\}\times\mathbb I^n)\cap g'(\{z\}\times \mathbb I^{n})=\varnothing$ for all $z\in \mathbb I^m$. To this end,
let $\varphi:\mathbb I^m\times(\mathbb I^n\oplus\mathbb I^{n})\to M\times\mathbb R^{2l+1}$ be the map generated by $f$ and $g$, where $\oplus$ denotes the discrete sum. Represent $\varphi$ as the product
$\varphi=\varphi_1\times\varphi_2$ of two maps $\varphi_1:\mathbb I^m\times(\mathbb I^n\oplus\mathbb I^{n})\to M$ and
$\varphi_2:\mathbb I^m\times(\mathbb I^n\oplus\mathbb I^{n})\to\mathbb R^{2l+1}$.

\textit{Claim $3$. There exists an $F_\sigma$-subset $F\subset\mathbb I^m\times(\mathbb I^n\oplus\mathbb I^{n})$ such that $\dim (X\setminus F)\leq n-k-1$ and $\dim\pi|F\leq k$, where $\pi:\mathbb I^m\times(\mathbb I^n\oplus\mathbb I^{n})\to\mathbb I^m$ is the projection.}

Indeed, denote $X=\mathbb I^m\times(\mathbb I^n\oplus\mathbb I^{n})$ and take an $F_\sigma$-set $H\subset X$ such that $\dim H\leq k$ and
$\dim\pi|(X\setminus H)\leq n-k-1$, see \cite{to}. Then $H$ is contained in a $G_\delta$-set $\widetilde H\subset X$ with $\dim\tilde H\leq k$, and the set $F=X\setminus\widetilde H$ is the required one.

Since $\dim\pi|F\leq k$ and $M$ has the $\NDDP[m]{n,k}$-property, by Corollary 1.2, there exists a map $\phi_1:\mathbb I^m\times(\mathbb I^n\oplus\mathbb I^{n})\to M$ sufficiently close to $\varphi_1$ with $\phi_1^{-1}(\phi_1(z))\cap\pi^{-1}(y)=\{z\}$ for all $z\in F\cap\pi^{-1}(y)$ and all $y\in\mathbb I^m$. Let $\widetilde F$ be the set of all $z\in X$ such that $\phi_1^{-1}(\phi_1(z))\cap\pi^{-1}(y)=\{z\}$ for all $y\in\mathbb I^m$. It is easily seen that $\widetilde F=\{z\in X:(\pi\triangle\phi_1)^{-1}((\pi\triangle\phi_1)(z))=\{z\}\}$, where $\pi\triangle\phi_1:X\to\mathbb I^m\times M$ is the diagonal product of $\pi$ and $\phi_1$. Because $\pi\triangle\phi_1$ is a closed map, $\widetilde F$ is a $G_\delta$-subset of $X$. Moreover, $\widetilde F$ contains $F$ and $P=X\setminus\widetilde F$ is an $\sigma$-compact set of dimension $\dim P\leq l$. Thus, there is a map $\phi_2:X\to\mathbb R^{2l+1}$ sufficiently close to $\varphi_2$ such that
$\phi_2|P$ is one-to-one. Then the map $\phi_1\times\phi_2:\mathbb I^m\times(\mathbb I^n\oplus\mathbb I^{n})\to M\times\mathbb R^{2l+1}$ is close to $\varphi$. Consequently, the maps
$f'=\phi|(\mathbb I^m\times\mathbb I^n)$ and $g'=\phi|(\mathbb I^m\times\mathbb I^{n})$ are close, respectively,
to $f$ and $g$. Moreover, $f'(\{z\}\times\mathbb I^n)\cap g'(\{z\}\times \mathbb I^{n})=\varnothing$ for all $z\in \mathbb I^m$. \hfill$\square$

\textbf{Acknowledgments.} The second author was partially supported by NSERC Grant 261914-03. The paper was finalized during his visit to Shimane University in April 2016. He appreciates the hospitality of his colleagues at Shimane university.

\end{document}